\newtheorem{thm}{Theorem}[section]
\newtheorem{prop}[thm]{Proposition}
\newtheorem{lem}[thm]{Lemma}
\newtheorem{cor}[thm]{Corollary}
\numberwithin{equation}{section}
\theoremstyle{definition}
\newtheorem{definition}[thm]{Definition}
\newtheorem{remark}[thm]{Remark}
\newtheorem{ex}[thm]{Example}
\newcommand{\rote}{{\begin{turn}{270}$\!\!\!\epsilon\ $\end{turn}}}
\newcommand{\OO}{{\rm O}}
\newcommand{\Db}{{\rm D}^{\rm b}}
\newcommand{\D}{{\rm D}}
\newcommand{\Km}{{\rm Km}}
\newcommand{\Aut}{{\rm Aut}}
\newcommand{\Pic}{{\rm Pic}}
\newcommand{\ch}{{\rm ch}}
\newcommand{\coh}{{\cat{Coh}}}
\newcommand{\qcoh}{{\cat{QCoh}}}
\newcommand{\mmod}[1]{{\ka_{#1}\text{-}\cat{Mod}}}
\newcommand{\Hom}{{\rm Hom}}
\newcommand{\id}{{\rm id}}
\newcommand{\dual}{^{\vee}}
\newcommand{\mor}[1][]{\xrightarrow{#1}}
\newcommand{\isomor}{\mor[\sim]}
\newcommand{\cat}[1]{\begin{bf}#1\end{bf}}
\newcommand{\Ext}{{\rm Ext}}
\newcommand{\FM}[1]{\Phi_{#1}}
\newcommand{\lotimes}{\stackrel{\mathbf{L}}{\otimes}}
\newcommand{\HH}{\mathrm{H\!H}}
\newcommand{\HO}{\mathrm{H}\Omega}
\newcommand{\HT}{\mathrm{HT}}
\newcommand{\Htil}{\widetilde H}
\newcommand{\Dp}{\mathrm{D_{perf}}}
\newcommand{\Inf}{{\rm Inf}}
\newcommand{\cal}{\mathcal}
\newcommand{\ka}{{\cal A}}
\newcommand{\kb}{{\cal B}}
\newcommand{\ke}{{\cal E}}
\newcommand{\kf}{{\cal F}}
\newcommand{\kg}{{\cal G}}
\newcommand{\kh}{{\cal H}}
\newcommand{\kk}{{\cal K}}
\newcommand{\ki}{{\cal I}}
\newcommand{\kl}{{\cal L}}
\newcommand{\ko}{{\cal O}}
\newcommand{\kp}{{\cal P}}
\newcommand{\kt}{{\cal T}}
\newcommand{\kz}{{\cal Z}}
\newcommand{\NN}{\mathbb{N}}
\newcommand{\ZZ}{\mathbb{Z}}
\newcommand{\RR}{\mathbb{R}}
\newcommand{\CC}{\mathbb{C}}
\newcommand{\RHom}{\mathbf{R}\Hom}
\newcommand{\RGamma}{\mathbf{R}\Gamma}
\newcommand{\Lotimes}{\stackrel{\mathbf{L}}{\otimes}}
\newcommand{\aR}{\mathbf{R}}
\newcommand{\eL}{\mathbf{L}}
\newcommand{\fb}{\overline{f}}
\newcommand{\tX}{\widetilde{X}}
\newcommand{\tY}{\widetilde{Y}}
\def\aa{\alpha}
\def\bb{\beta}
\def\dd{\delta}
\def\Dd{\Delta}
\def\ix{\iota_X}
\def\iy{\iota_Y}
\def\tt{\tau}
\def\Gg{\Gamma}
\begin{document}

\title{Infinitesimal Derived Torelli Theorem for K3 surfaces}

\author[E.\ Macr\`i and P.\ Stellari]{Emanuele Macr\`i and Paolo Stellari}

\address{E.M.: Department of Mathematics, University of Utah, 155 South 1400 East, Salt Lake City, UT 84112, USA}
\email{macri@math.utah.edu}

\address{S.M.: Department of Mathematics and Statistics, University of Massachusetts--Amherst, 710 N. Pleasant Street, Amherst,
MA 010003, USA}\email{mehrotra@math.umass.edu}

\address{P.S.: Dipartimento di Matematica ``F. Enriques'',
Universit{\`a} degli Studi di Milano, Via Cesare Saldini 50, 20133
Milano, Italy}
\email{paolo.stellari@unimi.it}

\keywords{K3 surfaces, derived categories, deformations}

\subjclass[2000]{18E30, 14J28, 13D10}

\begin{abstract}
We prove that the first order deformations of two smooth projective K3 surfaces are derived equivalent under a Fourier--Mukai transform if and only if there exists a special isometry of the total cohomology groups of the surfaces which preserves the Mukai pairing, an infinitesimal weight-$2$ decomposition and the orientation of a positive $4$-dimensional space. This generalizes the derived version of the Torelli Theorem. Along the way we show the compatibility of the actions on Hochschild homology and singular cohomology of any Fourier--Mukai functor.
\end{abstract}

\maketitle

\section{Introduction}\label{sec:Intro}

A great deal of geometric information is encoded in the lattice and Hodge structures defined on the cohomology groups of a K3 surface (i.e.\ a smooth, simply-connected, projective surface with trivial canonical bundle). Just two results making this
plain are the classical Torelli Theorem (see \cite{BR,LP,PS}) and its more recent categorical version,
the Derived Torelli Theorem.

This latter theorem, in the final form resulting from the
combination of \cite{Mu,Or1} and \cite{HMS1}, asserts that any
equivalence between the derived categories of coherent sheaves of
two K3 surfaces induces a Hodge isometry on their cohomology
groups which preserves the orientation of some positive
four-space. The reverse implication is also true and follows from
a detailed analysis of the geometry of moduli spaces of stable
sheaves on K3 surfaces.

Since the deformation theory of K3 surfaces is well understood, one can wonder if, in an appropriate setting, the Derived Torelli Theorem can be extended to (at least) first order deformations. More precisely, one can ask if the equivalences of the derived categories of first order deformations of K3 surfaces can be detected by the existence of isometries of some kind of deformed lattice and Hodge structures on the total cohomology groups.

\smallskip

To state our answer to this question, we need to sketch briefly the categorical setting and some additional structures on the total cohomologies of K3 surfaces (which will be extensively described in Sections \ref{subsec:category} and \ref{subsec:proof2} respectively).

For a smooth projective (complex) variety $X$, all the abelian categories which are first order deformations of the abelian category $\coh(X)$ of coherent sheaves on $X$ are parametrized by the second Hochschild cohomology group $\HH^2(X)$. In particular, for each element $v\in\HH^2(X)$, one can produce (see \cite{Toda}) an abelian category $\coh(X,v)$ which is the first order deformation of $\coh(X)$ in the direction $v$. The kernels of the Fourier--Mukai functors between the derived categories $\Db(X_1,v_1)$ and $\Db(X_2,v_2)$ of $\coh(X_1,v_1)$ and $\coh(X_2,v_2)$ are perfect complexes (i.e.\ complexes locally quasi-isomorphic to a finite complex of locally free sheaves) in $\Dp(X_1\times X_2,-J(v_1)\boxplus v_2)$. Roughly speaking, the operator $J$ changes the sign of a component of $v_1$.

If $X$ is a K3 surface, the total cohomology group $H^*(X,\ZZ)$ tensored by $\ZZ[\epsilon]/(\epsilon^2)$ inherits a pairing and, chosen $v\in\HH^2(X)$,  a weight-$2$ decomposition (via the Hochschild--Kostant--Rosenberg isomorphism). Such a $\ZZ[\epsilon]/(\epsilon^2)$-module, endowed with these additional structures, is called \emph{infinitesimal Mukai lattice} and it is denoted by $\widetilde H(X,v,\ZZ)$ (see Definition \ref{def:Mukailattice}). This lattice is closely related to the one defined in \cite{HS} for the case of twisted K3 surfaces. The analogy is actually quite precise, since a K3 surface twisted by an element in its Brauer group can be thought as a complete (not just formal) deformation in a gerby direction.

An isomorphism $g:\widetilde H(X_1,v_1,\ZZ)\isomor\widetilde H(X_2,v_2,\ZZ)$ of $\ZZ[\epsilon]/(\epsilon^2)$-modules is a Hodge isometry if it preserves the pairings and the weight-$2$ decompositions. The Hodge isometies we will consider are the effective and orientation preserving ones, i.e.\ those induced by isomorphisms $H^*(X_1,\ZZ)\isomor H^*(X_2,\ZZ)$ preserving the Mukai pairings, the natural weight-$2$ Hodge structures and the orientation of the spaces generated by the four positive directions in $H^*(X_1,\ZZ)$ and $H^*(X_2,\ZZ)$ (see the beginning of Section \ref{subsec:proof2}).

\smallskip

With this notation in mind we can state the main result of this paper, which is an infinitesimal version of the Derived Torelli Theorem.

\begin{thm}\label{thm:main1} Let $X_1$ and $X_2$ be smooth complex projective K3 surfaces and let $v_i\in\HH^2(X_i)$, with $i=1,2$. Then the following are equivalent:
    \begin{itemize}
    \item[(i)] There exists a Fourier--Mukai equivalence $$\FM{\widetilde\ke}:\Db(X_1,v_1)\isomor\Db(X_2,v_2)$$ with $\widetilde\ke\in\Dp(X_1\times X_2,-J(v_1)\boxplus v_2)$.
    \item[(ii)] There exists an orientation preserving effective Hodge isometry $$g:\Htil(X_1,v_1,\ZZ)\isomor\Htil(X_2,v_2,\ZZ).$$
    \end{itemize}
\end{thm}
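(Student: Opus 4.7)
The plan is to prove the two implications separately, reducing as much as possible to the classical Derived Torelli Theorem \cite{HMS1} and relying crucially on the compatibility between the actions of a Fourier--Mukai functor on Hochschild homology and on singular cohomology (announced in the abstract as an auxiliary result of the paper), combined with Toda's deformation framework \cite{Toda} for abelian categories and their derived kernels.

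For (i) $\Rightarrow$ (ii), given a Fourier--Mukai equivalence $\FM{\widetilde\ke}$ with kernel $\widetilde\ke\in\Dp(X_1\times X_2,-J(v_1)\boxplus v_2)$, I would define $g$ as the induced action on Hochschild homology of the deformed categories, transported via the HKR isomorphism to the infinitesimal Mukai lattice; concretely this is the deformed Mukai vector of $\widetilde\ke$ viewed as a class in $H^*(X_1\times X_2,\ZZ)\otimes\ZZ[\epsilon]/(\epsilon^2)$. Setting $\epsilon=0$ recovers the classical kernel $\ke:=\widetilde\ke\rest{\epsilon=0}$, and $\FM{\ke}:\Db(X_1)\isomor\Db(X_2)$ is an equivalence between the undeformed derived categories; hence $\overline g:=g\bmod\epsilon$ is an effective orientation-preserving Hodge isometry by the classical Derived Torelli Theorem, and orientation preservation of $g$ follows from that of $\overline g$ by definition. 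Compatibility of $g$ with the infinitesimal weight-$2$ decomposition is essentially built into the $J$-twist in the class of the kernel: via HKR and the Hochschild/cohomology compatibility, it reduces to the statement that the FM action identifies the Hochschild classes $v_1$ and $v_2$, which is automatic from $\widetilde\ke$ living in the direction $-J(v_1)\boxplus v_2$.

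For (ii) $\Rightarrow$ (i), I would reduce $g$ modulo $\epsilon$ to an effective orientation-preserving Hodge isometry $\overline g:H^*(X_1,\ZZ)\isomor H^*(X_2,\ZZ)$ and apply the classical Derived Torelli Theorem to produce an equivalence $\FM{\ke}:\Db(X_1)\isomor\Db(X_2)$ inducing $\overline g$. The next step is to lift $\ke$ to a perfect complex $\widetilde\ke\in\Dp(X_1\times X_2,-J(v_1)\boxplus v_2)$: via Toda's deformation framework adapted to kernels, such a lift is controlled by the action of $-J(v_1)\boxplus v_2\in\HH^2(X_1\times X_2)$ on $\ke$, and this obstruction vanishes precisely when the Hochschild-cohomological action of $\FM{\ke}$ identifies $v_1$ with $v_2$. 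By HKR and the compatibility result, this is equivalent to $g$ respecting the infinitesimal weight-$2$ decomposition, i.e.\ to the standing hypothesis. Once $\widetilde\ke$ exists, $\FM{\widetilde\ke}$ is automatically an equivalence because its central fiber $\FM{\ke}$ is, by a standard Nakayama-type argument in families.

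The main obstacle is the bridging step in (ii) $\Rightarrow$ (i) that translates ``$g$ preserves the infinitesimal weight-$2$ decomposition'' into the vanishing of the Hochschild-cohomological obstruction to deforming $\ke$; this is exactly why the paper establishes the compatibility between the Fourier--Mukai actions on $\HH_*$ and on $H^*$, since without it the numerical content of the hypothesis on $g$ cannot be matched with the purely categorical obstruction. A secondary technical point is to set up correctly the obstruction theory for perfect complexes on first-order deformations of $X_1\times X_2$ in the direction $-J(v_1)\boxplus v_2$ and to verify that the resulting lift is genuinely perfect, as is required by the kernel hypothesis in (i).
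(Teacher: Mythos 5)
Your proposal follows essentially the same route as the paper: restrict the deformed kernel to the central fibre, invoke the classical Derived Torelli Theorem together with the orientation result of \cite{HMS1}, use Toda's theorem to characterize when a kernel deforms in the direction $-J(v_1)\boxplus v_2$ by the condition $(\FM{\ke})^\HH(v_1)=v_2$, and translate that condition into preservation of the infinitesimal weight-$2$ decomposition via the compatibility of the Fourier--Mukai actions on Hochschild homology and singular cohomology. The one step you call ``automatic'' --- that a lift of $\ke$ to $\Dp(X_1\times X_2,-J(v_1)\boxplus v_2)$ forces $(\FM{\ke})^\HH(v_1)=v_2$ --- is in fact the nontrivial converse half of Toda's theorem, which the paper proves by a separate argument using the parametrization of the first order deformations of $\coh(X_2)$ by $\HH^2(X_2)$.
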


As in the classical case, this result can be reformulated in a neat way for autoequivalences of Fourier--Mukai type. Indeed, Theorem \ref{thm:main1} yields the existence of a surjective group homomorphism
\[
\xymatrix{\Aut^\mathrm{FM}(\Db(X,v))\ar@{->>}[r]&\OO_+(\Htil(X,v,\ZZ))},
\]
where $\Aut^\mathrm{FM}(\Db(X,v))$ is the group of equivalences of Fourier--Mukai type of the triandulated category $\Db(X,v)$ and $\OO_+(\Htil(X,v,\ZZ))$ is the group of orientation
preserving effective Hodge isometries. In the non-deformed case,
Bridgeland gave in \cite{B} a very nice conjectural description of
the kernel of the previous morphism. As we will observe in Section
\ref{subsec:auto}, the same applies to the infinitesimal case.

\smallskip

The final goal of our investigation would be to generalize Theorem
\ref{thm:main1} to deformations of K3 surfaces of any order and
possibly formal. A key step in this direction would be to show
that one can deform to any order the abelian categories of
coherent sheaves and, compatibly, the kernel of any Fourier--Mukai
equivalence. There are already examples in the literature of
deformations in this broader generality. This is the case of the
Poincar\'{e} sheaf for an abelian variety and its dual (see
\cite{BBT}). Unfortunately, the argument there seems to be quite
ad hoc and we cannot hope to apply those techniques to the
situation we want to treat. A more general attempt to deal with
this problem for abelian varieties has been pursued by D.\
Arinkin. Deformations of kernels of Fourier--Mukai equivalences
were also studied in \cite{HMS1} for very special analytic
directions in the case of K3 surfaces. For first order
deformations, such a theory, which will be used in this paper, has
been completely carried out in \cite{Toda} (see Theorem
\ref{thm:Toda}).

Once this goal is achived, one should be able to define a deformation functor $\mathrm{DF}_1$, over local Artin algebras, of
families of derived categories of non-commutative and gerby K3 surfaces. The definition of $\mathrm{DF}_1$, for first-order-deformations, is the content of \cite{Toda}.

On the other hand, there exists a second deformation functor $\mathrm{DF}_2$ of variations of weight-$2$ Hodge structures of
the Mukai lattice, endowed with the Mukai pairing. This is because, there is a global versal analytic
deformation, which is local analytically mini-versal, provided by a period domain.
Once an appropriate definition of $\mathrm{DF}_1$ is given, one should also have a morphism of functors
$p:\mathrm{DF}_1\to\mathrm{DF}_2$, which associates to a family of generalized K3-surfaces, a variation of Hodge structures.

In this setting, Theorem \ref{thm:main1} should state that $p$ induces isomorphisms of the Zariski tangent spaces
of the two deformation functors. A generalization of this result to deformations of K3 surfaces of any order or to formal deformations would allow to prove that $p$ is actually an isomorphism of functors.

\medskip

To prove Theorem \ref{thm:main1} we will need to show the compatibility between the actions of a Fourier--Mukai functor on Hochschild homology and singular cohomology which was conjecturally expected to hold true (see, for example, \cite{HFM,Markarian2}). Our result in this direction might be of independent interest and is the content of the following theorem, which will be proved in Section \ref{subsec:proofThm2}.

\begin{thm}\label{thm:main2}
    Let $X_1$ and $X_2$ be smooth complex projective varieties and let $\ke\in\Db(X_1\times X_2)$.
    Then the following diagram
    \begin{equation*}
        \xymatrix{\HH_*(X_1)\ar[rr]^{(\FM{\ke})_\HH}\ar[d]_{I_K^{X_1}}&&\HH_*(X_2)\ar[d]^{I_K^{X_2}}\\ H^*(X_1,\CC)\ar[rr]^{(\FM{\ke})_H}&&H^*(X_2,\CC)}
    \end{equation*}
    commutes.
\end{thm}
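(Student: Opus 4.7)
The plan is to factor any Fourier--Mukai functor into elementary operations and verify compatibility with $I_K$ for each one separately. Letting $p_i\colon X_1\times X_2\to X_i$ denote the projections, we write
$$
\FM{\ke}(-)=\aR p_{2*}\bigl(p_1^*(-)\lotimes \ke\bigr),
$$
and aim to show that the three basic operations --- flat pullback, derived tensor product with a fixed complex, and proper pushforward --- each commute with the passage from $\HH_*$ to $H^*(-,\CC)$ via $I_K$. This is the strategy one is forced into, since these three operations generate the integral transform and since $I_K$ is tied to each of them by its definition as an HKR map twisted by a square root of a Todd class.

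Pullback compatibility is essentially direct: both $p_1^*$ on Hochschild homology and $p_1^*$ on singular cohomology are functorially induced by $p_1$, and the $\sqrt{\td}$ correction in $I_K$ behaves correctly under pullback along a product projection. The tensor product step rests on the identification of the Hochschild class corresponding to a complex $\ke$ with its Mukai vector $v(\ke)=\ch(\ke)\cdot\sqrt{\td_{X_1\times X_2}}$ under $I_K^{X_1\times X_2}$; the required commutativity then reduces to the multiplicativity of the HKR isomorphism together with the fact that in both settings $(-\lotimes\ke)$ corresponds to cup product with this class.

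The pushforward is the heart of the proof and the main obstacle. What one must establish is that, for any smooth proper morphism $f\colon X\to Y$, the square
$$
\xymatrix{\HH_*(X)\ar[r]^{f_*}\ar[d]_{I_K^X} & \HH_*(Y)\ar[d]^{I_K^Y} \\ H^*(X,\CC)\ar[r]^{f_*} & H^*(Y,\CC)}
$$
commutes. Once $I_K$ is recognised as the HKR map twisted by $\sqrt{\td}$, this is a Hochschild-homological incarnation of Grothendieck--Riemann--Roch: the relative correction $\sqrt{\td_{X/Y}}$ packaged into $I_K$ absorbs exactly the discrepancy between the Hochschild and cohomological pushforwards. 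Combining this with the projection formula and the preceding two steps yields the full theorem.

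The delicate point throughout is the bookkeeping of the various $\sqrt{\td}$ factors on $X_1$, $X_2$, and $X_1\times X_2$: these must combine cleanly so that the intermediate Mukai-vector factorisation — with $\sqrt{\td_{X_1\times X_2}}$ splitting as $p_1^*\sqrt{\td_{X_1}}\cdot p_2^*\sqrt{\td_{X_2}}$ and interacting with the relative Todd classes of $p_1$ and $p_2$ — descends to the clean square stated in the theorem. This is where I expect the argument to be most error-prone, and where one must either invoke a careful analysis of the HKR isomorphism for the diagonal (as in the work of Caldararu, Markarian, and Ramadoss) or carry out the verification directly.
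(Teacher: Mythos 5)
Your decomposition of $\FM{\ke}$ into pullback, tensor product and pushforward is a legitimate reduction (each factor is itself a Fourier--Mukai functor and the Hochschild action is functorial under composition of kernels), but as written the argument has a genuine gap: the pushforward step, which you yourself identify as ``the heart of the proof,'' is asserted rather than proved. Declaring it to be ``a Hochschild-homological incarnation of Grothendieck--Riemann--Roch'' does not discharge it, because that compatibility \emph{is} the content of the theorem: taking $\ke=\ko_{\Gamma_f}$ for a morphism $f\colon X_1\to X_2$, the statement you need for $\aR f_*$ is literally an instance of the theorem being proved. Concretely, what must be shown is that $I_\mathrm{HKR}^{Y}$ applied to the Hochschild pushforward equals the cohomological pushforward of $I_\mathrm{HKR}^{X}$ corrected by the square root of the relative Todd class, so that the two $\sqrt{\td}$ twists in $I_K^X$ and $I_K^Y$ recombine; no argument for this is offered, and it is exactly where the Riemann--Roch-type difficulty is concentrated. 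A secondary soft spot is the tensor step: you need that $(-\lotimes\ke)$ acts on $\HH_*$ as cup product with $v(\ke)$ (not with $\ch(\ke)$ alone) after transport by $I_K$, which requires a multiplicativity property of the twisted HKR map that is itself nontrivial and must be sourced or proved.

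For comparison, the paper avoids proving direct functoriality of $I_K$ under pushforward altogether. It shows instead that $I_K^{X_2}\circ(\FM{\ke})_\HH$ and $(\FM{\ke})_H\circ I_K^{X_1}$ have the same pairings against all of $\HH_*(X_2)$: one computes $\langle\ch(\ke\dual),\mu\boxtimes\nu\rangle_C$ in two ways, using the K\"unneth compatibility of Lemma \ref{lem:Kunneth}, C\u{a}ld\u{a}raru's adjunction for the Mukai pairing on Hochschild homology, the identity $v(\ke)=I_K^{X_1\times X_2}(\ch(\ke))$, and Ramadoss's comparison of the Hochschild pairing with an explicit cohomological pairing. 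Non-degeneracy of $\int_{X_2}(-\wedge-)$ then forces the square to commute. In other words, the GRR-type input is isolated in two citable black boxes (C\u{a}ld\u{a}raru's Chern character comparison and Ramadoss's pairing formula), and everything else is formal duality. If you want to salvage your approach, you would need to supply the relative Hochschild Riemann--Roch statement for $\aR p_{2*}$ with the precise $\sqrt{\td}$ normalization, at which point you would essentially be reproving what the pairing argument obtains for free.
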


The isomorphisms $I_K^{X_i}$ in the diagram above are modifications of the classical Hochschild--Kostant--Rosenberg isomorphisms by the square root of the Todd class of $X_i$ (see \cite{Kont,Markarian2,Cal2}). A weaker version of Theorem \ref{thm:main2} is proved in \cite{Cal2} for algebraic classes.

The paper ends with a discussion about the applications to the cases of Kummer and Enriques surfaces (see Section \ref{sec:applications}).

\smallskip

In this paper we will always work over the complex numbers. For $X$ a smooth projective variety, $\Db(X):=\Db(\coh(X))$.

\section{Hochschild homology and singular cohomology}\label{sec:2}

This section is mainly devoted to the proof of the commutativity of the actions of Fourier--Mukai equivalences on Hochschild homology and singular cohomology. To this
end, we first show that such actions behave nicely with respect to the K\"{u}nneth decomposition. The proof is then carried out using directly Ramadoss' results \cite{Markarian2,Ramadoss}.

\subsection{Hochschild homology and cohomology}\label{subsec:Hoch}

Let $X$ be a smooth projective variety and denote by $\omega_X$ its dualizing sheaf. Given the diagonal embedding $\Delta_X:X\hookrightarrow X\times X$, we define $S_X:=\omega_X[\mathrm{dim}(X)]$, $S^{-1}_X:=\omega\dual_X[-\mathrm{dim}(X)]$ and $S^{\pm 1}_{\Delta_X}:=(\Delta_X)_*S^{\pm 1}_X$. The \emph{$i$-th Hochschild homology and cohomology groups}, $i\in\ZZ$, are now respectively (see, for example, \cite{Cal1})
\[
\begin{split}
\HH_i(X)&:=\Hom_{\Db(X\times X)}(S^{-1}_{\Delta_X}[i],\ko_{\Delta_X})\cong\Hom_{\Db(X)}(\ko_X[i],\mathbf{L}\Delta_X^*\ko_{\Delta_X})\\
\HH^i(X)&:=\Hom_{\Db(X\times X)}(\ko_{\Delta_X},\ko_{\Delta_X}[i])\cong\Hom_{\Db(X)}(\mathbf{L}\Delta_X^*\ko_{\Delta_X},\ko_X[i]).
\end{split}
\]
Hence one sets $\HH_*(X):=\bigoplus_i\HH_i(X)$ and $\HH^*(X):=\bigoplus_i\HH^i(X)$.

The \emph{Hochschild--Kostant--Rosenberg isomorphism}
$\mathbf{L}\Dd_X^*\ko_{\Dd_X}\isomor\bigoplus_i\Omega^i_X[i]$ (see
\cite{Cal2,Yek2}) yields the graded isomorphisms
\[
I^X_\mathrm{HKR}:\HH_*(X)\rightarrow\HO_*(X):=\bigoplus_i\HO_i(X)\quad\text{and}\quad I_X^\mathrm{HKR}:\HH^*(X)\rightarrow\HT^*(X):=\bigoplus_i\HT^i(X),
\]
where $\HO_i(X):=\bigoplus_{q-p=i}H^p(X,\Omega_X^q)$ and $\HT^i(X):=\bigoplus_{p+q=i}H^p(X,\wedge^q\kt_X)$. One then defines the graded isomorphisms
\[
I^X_K=(\mathrm{td}(X)^{1/2}\wedge(-))\circ I^X_\mathrm{HKR}\quad\text{and}\quad I_X^K=(\mathrm{td}(X)^{-1/2}\lrcorner(-))\circ I_X^\mathrm{HKR}.
\]

For $X_1$ and $X_2$ smooth and projective varieties and for
$\ke\in\Db(X_1\times X_2)$, let $\FM{\ke}:\Db(X_1)\to\Db(X_2)$ be
the \emph{Fourier--Mukai functor} with kernel $\ke$, i.e.\ the
functor $$\FM{\ke}:=\mathbf{R}(p_2)_*(\ke\lotimes p_1^*(-)),$$
where $p_i:X_1\times X_2\to X_i$ is the $i$-th projection. The
left and right adjoints of $\FM{\ke}$ are again Fourier--Mukai
functors with kernels $\ke_L:=\ke\dual\otimes p_2^*S_{X_2}$ and
$\ke_R:=\ke\dual\otimes p_1^*S_{X_1}$ respectively (see
\cite[Prop.\ 5.9]{HFM}). Notice that a Fourier--Mukai functor
extends to a functor
$\FM{\ke}:\mathrm{D}(\qcoh(X_1))\to\mathrm{D}(\qcoh(X_1))$.

From this, we get functorially a graded morphism $(\FM{\ke})_\HH:\HH_*(X_1)\to\HH_*(X_2)$ such that, if $\mu\in\HH_i(X_1)=\Hom(S^{-1}_{\Delta_{X_1}}[i],\ko_{\Delta_{X_1}})$, then $(\FM{\ke})_\HH(\mu)\in\HH_i(X_2)$ is defined (see \cite[Sect.\ 4.3]{Cal1}) by the composition
\begin{equation}\label{eqn:2}
S^{-1}_{\Delta_{X_2}}[i]\mor[\gamma]\ke\circ\ke\dual[i]\mor[\id\circ\eta\circ\id]\ke\circ S^{-1}_{\Delta_{X_1}}[i]\circ S_{\Delta_{X_1}}\circ\ke\dual\mor[\id\circ\mu\circ\id\circ\id]\ke\circ S_{\Delta_{X_1}}\circ\ke\dual\mor[\rote]\ko_{\Delta_{X_2}},
\end{equation}
where $\kf\circ\kg$ is the kernel of the composition
$\FM{\kf}\circ\FM{\kg}$ (see e.g.\ \cite[Prop.\ 5.10]{HFM}), $\eta:\ko_{\Delta_{X_1}}\to S^{-1}_{\Delta_{X_1}}\circ S_{\Delta_{X_1}}$ is the isomorphism coming from the easy fact that $\FM{S^{-1}_{\Delta_{X_1}}}\circ\FM{S_{\Delta_{X_1}}}=\id$ and
the morphisms $\gamma$ and $\rote$ are the natural ones (see
\cite[Appendix A]{Cal1}). If $\FM{\ke}$ is an equivalence, then
there exists also an action $(\FM{\ke})^\HH$ on Hochschild
cohomology induced by the functor
$\FM{\ke\boxtimes\kp}:\Db(X_1\times X_1)\to\Db(X_2\times X_2)$,
where $\kp\cong\ke_L\cong\ke_R$ is the kernel of the inverse of
$\FM{\ke}$, which sends $\ko_{\Dd_{X_1}}$ to $\ko_{\Dd_{X_2}}$
(see, for example, \cite[Remark 6.3]{HFM}). The following easy result is
probably well-known (see, for example, \cite[Sect.\ 9.4]{Weibel} for a similar statement about Hochschild homology of rings).

\begin{lem}\label{lem:Kunneth}
    Let $X_1$, $X_1'$, $X_2$ and $X'_2$ be smooth projective varieties.

{\rm (i)} There exists a natural isomorphism
    \begin{equation*}
        \kappa_{X_1,X_2}:\HH_*(X_1)\otimes\HH_*(X_2)\rightarrow\HH_*(X_1\times X_2),\qquad (\aa,\bb)\mapsto\aa\boxtimes\bb:=p_{X_1}^*\aa\otimes p_{X_2}^*\bb
    \end{equation*}
    respecting the functoriality of Hochshild homology and compatible with the Hochschild--Kostant--Rosenberg isomorphism, i.e.\ the diagram
    \[
        \xymatrix{
        \HH_*(X_1)\otimes\HH_*(X_2)\ar[rr]^{\kappa_{X_1,X_2}}\ar[d]_{I_\mathrm{HKR}^{X_1}\otimes I_\mathrm{HKR}^{X_2}}&&\HH_*(X_1\times X_2)\ar[d]^{I_\mathrm{HKR}^{X_1\times X_2}}\\
        \HO_*(X_1)\otimes\HO_*(X_2)\ar[rr]^{\wedge}&&\HO_*(X_1\times X_2)
        }
    \]
    commutes.

{\rm (ii)} Given two Fourier--Mukai functors $\FM{\ke}:\Db(X_1)\to\Db(X'_1)$ and $\FM{\kf}:\Db(X_2)\to\Db(X'_2)$, we have $$(\FM{\ke\boxtimes\kf})_\HH(\mu\boxtimes\nu)=(\FM{\ke})_\HH(\mu)\boxtimes(\FM{\kf})_\HH(\nu),$$ for any $\mu\in\HH_*(X_1)$ and $\nu\in\HH_*(X_2)$.
\end{lem}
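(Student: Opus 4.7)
The plan is to reduce both statements to formal consequences of the compatibility of external products with the basic derived-category operations entering the definition of Hochschild homology and of the action \eqref{eqn:2}.

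For (i), I would start from the natural identification
\[
\ko_{\Delta_{X_1 \times X_2}} \iso \sigma^*(\ko_{\Delta_{X_1}} \boxtimes \ko_{\Delta_{X_2}})
\]
on $(X_1 \times X_2)^2$, with $\sigma$ the canonical reordering $(X_1 \times X_2)^2 \iso X_1^2 \times X_2^2$. Pulling back along the diagonals and applying flat base change gives
\[
\mathbf{L}\Delta_{X_1 \times X_2}^* \ko_{\Delta_{X_1 \times X_2}} \iso p_{X_1}^* \mathbf{L}\Delta_{X_1}^* \ko_{\Delta_{X_1}} \Lotimes p_{X_2}^* \mathbf{L}\Delta_{X_2}^* \ko_{\Delta_{X_2}}.
\]
Combined with the external-product map for $\Hom$ groups on a product of smooth projective varieties (an isomorphism here since the relevant complexes are perfect, as visible through HKR), this produces the required graded isomorphism $\kappa_{X_1,X_2}$. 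The compatibility with HKR then reduces to the naturality of the HKR quasi-isomorphism under external products together with the classical identification $\Omega_{X_1 \times X_2}^\bullet \iso p_{X_1}^*\Omega_{X_1}^\bullet \otimes p_{X_2}^*\Omega_{X_2}^\bullet$, under which the external product on $\HH_*$ matches the wedge product on Hodge cohomology.

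For (ii), I would plug $\ke \boxtimes \kf$ into \eqref{eqn:2} and verify arrow-by-arrow that the resulting composition is the external product of the corresponding compositions for $\ke$ on $\mu$ and for $\kf$ on $\nu$. Three compatibilities enter: first, that kernel duals and Serre kernels respect external products, so that $(\ke \boxtimes \kf)^\vee \iso \ke^\vee \boxtimes \kf^\vee$ and $S_{\Delta_{X_1' \times X_2'}} \iso S_{\Delta_{X_1'}} \boxtimes S_{\Delta_{X_2'}}$ up to the obvious reordering, and consequently the left/right adjoint kernels of $\FM{\ke \boxtimes \kf}$ are the box products of those of $\FM{\ke}$ and $\FM{\kf}$; second, that convolution of kernels commutes with external products, $(\ke \boxtimes \kf) \circ (\ka \boxtimes \kb) \iso (\ke \circ \ka) \boxtimes (\kf \circ \kb)$, a direct consequence of flat base change and the projection formula; and third, that the structural morphisms $\gamma$, $\eta$, and $\rote$ of \cite{Cal1} are natural with respect to external products. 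Assembled together, these ensure that each of the four arrows appearing in \eqref{eqn:2} for $\ke \boxtimes \kf$ splits as the external product of the analogous arrow for $\ke$ on $\mu$ and for $\kf$ on $\nu$, yielding the desired identity.

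The main obstacle is not conceptual but bookkeeping: the reorderings of factors must be tracked carefully, and one has to verify the external-product naturality of the adjunction unit $\eta$ and counit $\rote$ from their definitions. Granted Calaque's framework from \cite{Cal1}, each required compatibility is a formal consequence of base change, projection formula, and the behaviour of adjunctions under external products, so once the set-up isomorphisms are in place the remainder is a careful diagram chase.
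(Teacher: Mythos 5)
Your proposal is correct overall, and your treatment of part (ii) is essentially the paper's own argument: both plug $\ke\boxtimes\kf$ into the composition \eqref{eqn:2} and check arrow-by-arrow that everything respects the $\boxtimes$-product, using $\ko_{\Delta_{X_1\times X_2}}\cong\ko_{\Delta_{X_1}}\boxtimes\ko_{\Delta_{X_2}}$, $S^{-1}_{\Delta_{X_1\times X_2}}\cong S^{-1}_{\Delta_{X_1}}\boxtimes S^{-1}_{\Delta_{X_2}}$, and the compatibility of convolution and adjoint kernels with external products. For part (i), however, you take a genuinely different route. The paper works with Yekutieli's explicit local model: $\mathbf{L}\Delta_X^*\ko_{\Delta_X}$ is represented by the completed Hochschild chain complex, $\kappa_{X_1,X_2}$ is defined locally by the shuffle map $\mathrm{sh}:R_1^{\otimes p+1}\otimes_\CC R_2^{\otimes q+1}\to(R_1\otimes_\CC R_2)^{\otimes p+q+1}$, and the commutativity of the square with the HKR maps is verified by computing directly against the local formula $I^n(r_0\otimes\cdots\otimes r_n)=\tfrac{1}{n!}\,r_0\,\mathrm{d}r_1\wedge\cdots\wedge\mathrm{d}r_n$. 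You instead build $\kappa_{X_1,X_2}$ abstractly from the derived decomposition $\mathbf{L}\Delta_{X_1\times X_2}^*\ko_{\Delta_{X_1\times X_2}}\cong\mathbf{L}\Delta_{X_1}^*\ko_{\Delta_{X_1}}\boxtimes\mathbf{L}\Delta_{X_2}^*\ko_{\Delta_{X_2}}$ together with the K\"unneth isomorphism for $\Hom$-groups; this is closer in spirit to the alternative the referee of the paper suggests (via the universal Atiyah class). Your version buys a cleaner, coordinate-free construction of $\kappa$, but it defers the real content of the square: the phrase ``naturality of the HKR quasi-isomorphism under external products'' is essentially a restatement of the multiplicativity of HKR, which is exactly the shuffle-versus-wedge computation the paper carries out. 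That multiplicativity is classical and citable, so this is a deferral rather than a gap, but you should name it as the key input instead of absorbing it into ``naturality.''
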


\begin{proof}
Following \cite{Yek2}, for any smooth projective variety $X$, the complex $\mathbf{L}\Delta_X^*\ko_{\Delta_X}$ can be represented by the complex of $\ko_X$-modules $\widehat{C}^\bullet(X)$ such that, if $q>0$, then $\widehat{C}^{q}(X)=0$, while, for $q\leq 0$ and for any affine $U=\mathrm{Spec}(R)\subseteq X$, the group of sections $\Gamma(U,\widehat{C}^{q}(X))$ is an adic completion of the usual module of Hochschild chains $C^q(R)=R^{\otimes -q+2}\otimes_{R\otimes R}R$. The differential is induced by the differential $d^n:R^{\otimes n+1}\to R^{\otimes n}$ such that
    \[
    \begin{split}
        d^n(r_0\otimes\ldots\otimes r_n):=&r_0r_1\otimes r_2\otimes\ldots\otimes r_n-r_0\otimes r_1r_2\otimes\ldots\otimes r_n+\ldots\\
        &\ldots+(-1)^{n-1}r_0\otimes\ldots\otimes r_{n-1}r_n+(-1)^nr_0r_n\otimes\ldots\otimes r_{n-1}.
    \end{split}
    \]
Moreover, the isomorphism $I^X_\mathrm{HKR}$ is given, locally in $U$, by the morphism $I^n(r_0\otimes\ldots\otimes r_n)=\frac{1}{n!}r_0\mathrm{d}r_1\wedge\ldots\wedge\mathrm{d}r_n$ of $R$-modules.

For any pair of positive integers $p,q$, a $(p,q)$-shuffle is a permutation $\sigma$ of $\{1,\ldots,p+q\}$ such that $\sigma(1)<\ldots<\sigma(p)$ and $\sigma(p+1)<\ldots<\sigma(p+q)$. The morphism $\mathrm{sh}:R_1^{\otimes p+1}\otimes_\CC R_2^{\otimes q+1}\to (R_1\otimes_\CC R_2)^{\otimes p+q+1}$
\[
\mathrm{sh}((r_0\otimes\ldots\otimes r_p)\otimes_\CC(r'_0\otimes r_{p+1}\otimes\ldots\otimes r_{p+q})):=\sum_{\sigma\in\mathrm{Sh}(p,q)}(-1)^\sigma r_0r'_0\otimes r_{\sigma^{-1}(1)}\otimes\ldots\otimes r_{\sigma^{-1}(p+q)},
\]
where $\mathrm{Sh}(p,q)$ is the set of all $(p,q)$-shuffles and $X_i$ is locally $\mathrm{Spec}(R_i)$, gives the local description of the isomorphisms $\kappa_{X,Y}$. The compatibility with the Hochschild--Kostant--Rosenberg isomorphism in (i) is now an easy check based on the previous local descriptions. (As pointed out by the referee, (i) could be probably proved directly by using the adjunction between the Hochschild--Kostant--Rosenberg isomorphism and the universal Atiyah class \cite{Cal2}.)

To prove (ii), observe that $\FM{\ke\boxtimes\kf}(\kg_1\boxtimes\kg_2)=\FM{\ke}(\kg_1)\boxtimes\FM{\kf}(\kg_2)$ and a similar decomposition holds true for the right and left adjoints. Since $\ko_{\Delta_{X_1\times X_2}}\cong\ko_{\Delta_{X_1}}\boxtimes\ko_{\Delta_{X_2}}$ and $S^{-1}_{\Delta_{X_1\times X_2}}\cong S^{-1}_{\Delta_{X_1}}\boxtimes S^{-1}_{\Delta_{X_2}}$, the statement follows directly from the definition of the action of $\FM{\ke\boxtimes\kf}$ on Hochschild homology, since all the morphisms in \eqref{eqn:2} preserve the $\boxtimes$-product.
\end{proof}

\begin{remark}\label{rmk:HochCohom}
    In the case of Fourier--Mukai equivalences, an analogous result can be proved for Hochschild cohomology by applying the same proof.
\end{remark}

For later use, observe that any Fourier--Mukai functor $\FM{\ke}:\Db(X_1)\to\Db(X_2)$ also induces an action on singular cohomology $(\FM{\ke})_H:H^*(X_1,\CC)\to H^*(X_2,\CC)$ by $(\FM{\ke})_H(a):=(p_2)_*(v(\ke).p_1^*(a))$, where $v(\ke):=\ch(\ke).\sqrt{\mathrm{td}(X_1\times X_2)}$ is the \emph{Mukai vector} of $\ke$.

\subsection{Proof of Theorem \ref{thm:main2}}\label{subsec:proofThm2} For any smooth projective variety $X$, the Hochschild homology carries a non-degenerate pairing $\langle-,-\rangle_C:\HH_i(X)\times\HH_{-i}(X)\to\CC$ such that, according to \cite[Sect.\ 5]{Cal1}, for any $\mu\in\HH_i(X)$ and $\nu\in\HH_{-i}(X)$,
\[
\langle\mu,\nu\rangle_C:=\mathrm{tr}_{X\times X}(\tau(\mu)\circ\nu),
\]
where $\tau:\RHom_{X\times
X}(S^{-1}_{\Delta_X},\ko_{\Delta_X})\isomor\RHom_{X\times
X}(\ko_{\Delta_X},S_{\Delta_X})$ is the homomorphism obtained by
tensoring on the right by $p_2^*S_X$, with $p_2:X\times X\to X$
the second projection, and making the natural identifications.

If $\ke\in\Db(X_1\times X_2)$, then $\ke$ can be alternatively
seen as a complex in $\Db(\mathrm{pt}\times X_1\times X_2)$ and
hence there is a morphism
$(\FM{\ke})_\HH:\HH_0(\mathrm{pt})\to\HH_0(X_1\times X_2)$.
Following \cite{Markarian2,Cal1} one then defines the \emph{Chern character
of $\ke$} as the element
$\ch(\ke):=(\FM{\ke})_\HH(1)\in\HH_0(X_1\times X_2)$. The
comparison with the standard Chern character in singular
cohomology is the content of \cite[Thm.\ 4.5]{Cal2} which yields
$v(\ke)=I^{X_1\times X_2}_K(\ch(\ke))$.

With this in mind, one proves that the following equality holds true
    \begin{equation}\label{eqn:1}
    \langle\ch(\ke\dual),\mu\boxtimes\nu\rangle_C=\int_{X_2} I^{X_2}_K((\FM{\ke})_\HH(\mu))\wedge I^{X_2}_K(\nu),
    \end{equation}
    for any $\mu\in\HH_*(X_1)$ and $\nu\in\HH_*(X_2)$.

Indeed, first define the functor
    \begin{equation*}
        \Psi_\ke:\Db(X_1\times X_2)\to\Db(\mathrm{pt}),\qquad \kf\mapsto\RGamma(\ke\Lotimes\kf).
    \end{equation*}
An easy computation of the kernel of the composition of Fourier--Mukai functors shows that $\Psi_\ke$ is isomorphic to the following composition
    \[
    \Db(X_1\times X_2)\mor[\FM{\ke}\times\id]\Db(X_2\times X_2)\mor[\mathbf{L}\Delta_{X_2}^*]\Db(X_2)\mor[\RGamma]\Db(\mathrm{pt}),
    \]
where $\FM{\ke}\times\id:=\FM{\ke\boxtimes\ko_{\small{\Delta_{X_2}}}}$.

By definition $(\Psi_\ke)_\HH(\mu\boxtimes\nu)=\langle 1,(\Psi_\ke)_{\HH}(\mu\boxtimes\nu)\rangle_C$. By adjunction (\cite[Thm.\ 8]{Cal1}) and the definition of Chern character, the latter is equal to $\langle\ch(\ke\dual),\mu\boxtimes\nu\rangle_C$.
Applying adjunction once more and Lemma \ref{lem:Kunneth}(ii), we have
    \[
    (\Psi_\ke)_\HH(\mu\boxtimes\nu)=\langle\ch(\ko\dual_{\small{\Delta_{X_2}}}),(\FM{\ke}\times\id)_\HH(\mu\boxtimes\nu)\rangle_C=\langle\ch(\ko\dual_{\small{\Delta_{X_2}}}),(\FM{\ke})_\HH(\mu)\boxtimes\nu\rangle_C.
    \]
Following \cite{Ramadoss}, denote by $K:\HO_*(X_i)\to\HO_*(X_i)$
the involution such that $K|_{H^q(X_i,\Omega_{X_i}^p)}=(-1)^q$.
Using \cite[Prop.\ 3]{Ramadoss} (or more precisely equation (8) in
\cite{Ramadoss}) and Lemma \ref{lem:Kunneth}(i), we get the
following chain of equalities
    \[
    \begin{split}
    \langle\ch(\ko\dual_{\small{\Delta_{X_2}}}),(\FM{\ke})_\HH(\mu)\boxtimes\nu\rangle_C&=\int_{X_2\times X_2} K(\ch(\ko\dual_{\small{\Delta_{X_2}}}))\wedge I^{X_2\times X_2}_\mathrm{HKR}((\FM{\ke})_\HH(\mu)\boxtimes\nu)\wedge\mathrm{td}(X_2\times X_2)\\
    &=\int_{X_2\times X_2} \ch(\ko_{\small{\Delta_{X_2}}})\wedge I^{X_2}_\mathrm{HKR}((\FM{\ke})_\HH(\mu))\wedge I^{X_2}_\mathrm{HKR}(\nu)\wedge\mathrm{td}(X_2\times X_2)\\
    &=\int_{X_2} I^{X_2}_K((\FM{\ke})_\HH(\mu))\wedge I^{X_2}_K(\nu),
    \end{split}
    \]
for any $\mu\in\HH_*(X_1)$ and $\nu\in\HH_*(X_2)$. This proves \eqref{eqn:1}.

Arguing in the same way, one shows that the following identities are true
\[
\begin{split}
    \langle\ch(\ke\dual),\mu\boxtimes\nu\rangle_C&=\int_{X_1\times X_2} K(\ch(\ke\dual))\wedge I^{X_1}_\mathrm{HKR}(\mu)\wedge I^{X_2}_\mathrm{HKR}(\nu)\wedge\mathrm{td}(X_1\times X_2)\\
    &=\int_{X_1\times X_2} v(\ke)\wedge I^{X_1}_K(\mu)\wedge I^{X_2}_K(\nu)\\
    &=\int_{X_2} (\FM{\ke})_H(I^{X_1}_K(\mu))\wedge I^{X_2}_K(\nu),
\end{split}
\]
for any $\mu\in\HH_*(X_1)$ and $\nu\in\HH_*(X_2)$. By \eqref{eqn:1}, we have
\[
\int_{X_2} I^{X_2}_K((\FM{\ke})_\HH(\mu))\wedge I^{X_2}_K(\nu)=\int_{X_2} (\FM{\ke})_H(I^{X_1}_K(\mu))\wedge I^{X_2}_K(\nu),
\]
for any $\mu\in\HH_*(X_1)$ and $\nu\in\HH_*(X_2)$.

From the fact that the pairing $\int_{X_2}(-\wedge -)$ is
non-degenerate, $I^{X_2}_K\circ(\FM{\ke})_\HH=(\FM{\ke})_H\circ
I^{X_1}_K$. The theorem now follows from the natural
identification $H^*(X,\CC)\cong\HO_*(X)$ given by the Hodge
decomposition.

\begin{remark}\label{rmk:pairing}
    As observed in \cite{Ramadoss}, the singular cohomology groups $H^*(X_i,\CC)$ carry a non-degenerate pairing $\langle-,-\rangle_R:H^*(X_i,\CC)\times H^*(X_i,\CC)\to\CC$ given by the formula (implicit in the previous proof)
    \[
    \langle a,b\rangle_R:=\int_{X_i}\frac{K(a)}{\sqrt{\ch(\omega_{X_i})}}\wedge b,
    \]
    for any $a,b\in H^*(X_i,\CC)$. If $\ke\in\Db(X_1\times X_2)$ is the kernel of a Fourier--Mukai equivalence, then the commutativity of the diagram in Theorem \ref{thm:main2}  and \cite[Prop.\ 5]{Ramadoss} yields also the compatibility with the natural pairings, i.e.\
    \[
    \begin{split}
    \langle \alpha,\beta\rangle_C&=\langle I_K^{X_1}(\alpha),I_K^{X_1}(\beta)\rangle_R=\langle(\FM{\ke})_\HH(\alpha),(\FM{\ke})_\HH(\beta)\rangle_C\\&=\langle I_K^{X_2}((\FM{\ke})_\HH(\alpha)),I_K^{X_2}((\FM{\ke})_\HH(\beta))\rangle_R=\langle(\FM{\ke})_H(I_K^{X_1}(\alpha)),(\FM{\ke})_H(I_K^{X_1}(\beta))\rangle_R
    \end{split}
    \]
    for any $\alpha,\beta\in\HH_*(X_1)$.
    The \emph{generalized Mukai pairing} $\langle-,-\rangle_M:H^*(X_i,\CC)\times H^*(X_i,\CC)\to\CC$, introduced in \cite[Def.\ 3.2]{Cal2}, is the non-degenerate pairing such that, for any $a,b\in H^*(X_i,\CC)$,
\begin{equation}\label{eqn:Mukai}
    \langle a,b\rangle_M:=\langle\widetilde\tau(a),b\rangle_R
\end{equation}
    where $\widetilde\tau|_{H^q(X,\Omega_X^p)}=(\sqrt{-1})^{-p-q}$. Such a pairing is also compatible with the action of $\FM{\ke}$ on the singular cohomology.
\end{remark}

\section{Infinitesimal deformations}\label{sec:3}

In this section we prove Theorem \ref{thm:main1} which relates the existence of equivalences between some first order deformations of the derived categories of coherent sheaves and the existence of special Hodge isometries of the total cohomology. To this end, in Section \ref{subsec:category} we briefly recall the description of the first order deformations of $\coh(X)$ given in \cite{Toda}, for $X$ a smooth projective variety. Notice that an equivalent theory can be obtained using the general results in \cite{Lowen,LvdB1,LvdB2}. Although the first approach is the preferred one in this paper, the latter will also be made use of at some specific points.

After this, in Section \ref{subsec:proof2}, we introduce a special weight-$2$ decomposition of the total cohomology groups (tensored by $\ZZ[\epsilon]/(\epsilon^2)$) preserved by the action of Fourier--Mukai equivalences.

\subsection{The categorical setting}\label{subsec:category}

For $X$ a smooth projective variety and $v\in\HH^2(X)$, following \cite{Toda}, we consider the $\CC[\epsilon]/(\epsilon^2)$-linear abelian category $\coh(X,v)$ which is the first order deformation of $\coh(X)$ in the direction $v$. Since the precise definition of this category is not needed in the rest of this paper, we just recall the  essentials of its construction.

Write $I^\mathrm{HKR}_X(v)=(\alpha,\beta,\gamma)\in\HT^2(X)=H^2(X,\ko_X)\oplus H^1(X,\kt_X)\oplus H^0(X,\wedge^2\kt_X)$. Then one defines a sheaf $\ko_X^{(\beta,\gamma)}$ of $\CC[\epsilon]/(\epsilon^2)$-algebras on $X$ depending only on $\beta$ and $\gamma$.
Representing $\alpha\in H^2(X,\ko_X)$ as a \v{C}ech $2$-cocycle $\{\alpha_{ijk}\}$ one has an element $\widetilde\alpha:=\{1-\epsilon\alpha_{ijk}\}$ which is a \v{C}ech $2$-cocycle with values in the invertible elements of the center of $\ko^{(\beta,\gamma)}_X$.
In analogy with the classical twisted setting, we get the abelian category $\coh(\ko_X^{(\beta,\gamma)},\widetilde{\alpha})$ of $\widetilde\alpha$-twisted coherent $\ko^{(\beta,\gamma)}_X$-modules. Now set $\coh(X,v):=\coh(\ko_X^{(\beta,\gamma)},\widetilde{\alpha})$ and $\mathrm{D}^{*}(X,v):=\mathrm{D}^{*}(\coh(X,v))$, where $*=\mathrm{b},\pm,\emptyset$. Analogously, one defines the abelian category $\qcoh(X,v)$ of $\widetilde\alpha$-twisted quasi-coherent $\ko^{(\beta,\gamma)}_X$-modules, as the first order deformation of $\qcoh(X)$.

\begin{remark}\label{rmk:Lowen}
    The construction of the abelian categories sketched above is a geometric incarnation of a more abstract theory developed in \cite{LvdB2,LvdB1}. The connection between the two approaches can explained using \cite{Lowen}, where the flat deformations of the abelian category of (quasi-)coherent sheaves on a variety are shown to be equivalent to flat deformations of the structure sheaf of $X$ as a \emph{twisted presheaf} (see \cite[Thm.\ 1.4]{Lowen}). Such deformations are precisely the ones studied in \cite{Toda}.

    The obstruction theory for lifting objects in these deformations has been developed in \cite{Lowen2}. In particular, the obstruction class to deforming an object $\ke\in\Db(X)$ lives in $\Ext^2(\ke,\ke)$, while all possible deformations form an affine space over $\Ext^1(\ke,\ke)$.
\end{remark}

\begin{remark}\label{rmk:functors}
    According to \cite[Sect.\ 4]{Toda}, the usual derived functors are well-defined in this context, provided the correct compatibilities. For example, consider the obvious morphism of algebras $\iota^\sharp:\ko_X^{(\beta,\gamma)}\to\ko_X$ which, for simplicity, will be denoted by $\iota$. Then we have the functors $\iota_*:\Db(X)\to\Db(X,v)$, $\mathbf{L}\iota^*:\mathrm{D}^-(X,v)\to\mathrm{D}^-(X)$ and $\iota^!:\mathrm{D}^-(X,v)\to\mathrm{D}^-(X)$, where $\iota^!(-):=\bar\iota^*\mathrm{R}\kh om_{\mathrm{D}^-(X,v)}(\iota_*\ko_X,-)$ and $\bar\iota^*$ is induced by the natural exact functor between the categories of coherent $\iota_*\ko_X$-modules and $\coh(X)$. The functors $\mathbf{L}\iota^*$ and $\iota^!$ are, respectively, left and right adjoints of $\iota_*$.

Since there is no ambiguity, here as in the rest of the paper, we denote a functor $F:\Db(X)\to\Db(Y)$ and its (possible) extension to $\Db(X,v)\to\Db(Y,w)$ in the same way.
\end{remark}

As observed in the introduction, by the definition of $\coh(X,v)$, the notion of perfect complex makes sense also in this context. The category of perfect complexes is denoted by $\Dp(X,v)$. Take two smooth projective varieties $X_1$ and $X_2$, $v_i\in\HH^2(X_i)$ and $\widetilde\ke\in\Dp(X_1\times X_2,-J(v_1)\boxplus v_2)$, where $$-J(v_1)\boxplus v_2:=-p_1^*J(v_1)+p_2^*v_2$$ for $p_i:X_1\times X_2\to X_i$ the projection, and $J:\HH^2(X_1)\to\HH^2(X_1)$ is such that $(I_{X_1}^\mathrm{HKR}\circ J\circ (I_{X_1}^\mathrm{HKR})^{-1})(\alpha,\beta,\gamma)=(\alpha,-\beta,\gamma)$. (Notice that, when we write $-J(v_1)\boxplus v_2$, we are implicitly using Remark \ref{rmk:HochCohom}.) Then the Fourier--Mukai functor $$\FM{\widetilde\ke}:\Db(X_1,v_1)\to\Db(X_2,v_2)$$ is well-defined.

\begin{prop}\label{prop:FM}
    Let $X_1$ and $X_2$ be smooth projective varieties and $v_i\in\HH^2(X_i)$. Take $\widetilde\ke\in\Dp(X_1\times X_2,-J(v_1)\boxplus v_2)$ and set $\ke:=\mathbf{L}\iota^*\widetilde\ke$.

    {\rm (i)} $\FM{\widetilde\ke}\circ\iota_*\cong\iota_*\circ\FM{\ke}:\Db(X_1)\to\Db(X_2,v_2)$ and $\iota^!\circ\FM{\widetilde\ke}\cong\FM{\ke}\circ\iota^!:\Db(X_1,v_1)\to\mathrm{D}^-(X_2)$.

    {\rm (ii)} If $\FM{\widetilde\ke}:\Db(X_1,v_1)\to\Db(X_2,v_2)$ is an equivalence, then $\FM{\ke}:\Db(X_1)\to\Db(X_2)$ is an equivalence as well.
\end{prop}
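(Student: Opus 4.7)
The plan is to deduce both parts from standard Grothendieck six-functor formalism, transported to the deformed twisted categories $\Db(X,v)$ via Toda's model of $\coh(X,v)$ as $\widetilde\alpha$-twisted $\ko_X^{(\beta,\gamma)}$-modules. For the first isomorphism in (i), I would start from
\[
\FM{\widetilde\ke}(\iota_*\kf)=\mathbf{R}p_{2*}(\widetilde\ke\Lotimes p_1^*\iota_*\kf)
\]
and move $\iota_*$ outward in three steps. Flat base change along the flat projection $p_1$ gives $p_1^*\iota_*\cong\iota_*p_1^*$ (with the two $\iota$'s denoting the reduction morphisms on $X_1$ and on $X_1\times X_2$, respectively); the projection formula for $\iota$, combined with the perfectness of $\widetilde\ke$, yields $\widetilde\ke\Lotimes\iota_*(-)\cong\iota_*(\mathbf{L}\iota^*\widetilde\ke\Lotimes -)=\iota_*(\ke\Lotimes -)$; and $\mathbf{R}p_{2*}\circ\iota_*\cong\iota_*\circ\mathbf{R}p_{2*}$ since both realize the total pushforward along $X_1\times X_2\to X_2$. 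Stringing these together produces $\iota_*\FM{\ke}(\kf)$.

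For the second isomorphism in (i), the plan is to take right adjoints of the first. The right adjoint of $\FM{\widetilde\ke}$ is Fourier--Mukai with kernel $\widetilde\ke_R=\widetilde\ke\dual\otimes p_1^*S_{X_1}$, and similarly for $\ke$. Since $\mathbf{L}\iota^*$ commutes with dualization on perfect complexes and with pullback of shifted line bundles, $\mathbf{L}\iota^*(\widetilde\ke_R)\cong(\mathbf{L}\iota^*\widetilde\ke)_R=\ke_R$. Because the first isomorphism holds for an arbitrary perfect kernel, one may apply it to the right-adjoint kernel of $\widetilde\ke$; dualizing the resulting isomorphism via the adjunction $(\mathbf{L}\iota^*,\iota_*,\iota^!)$ and relabeling then yields $\iota^!\circ\FM{\widetilde\ke}\cong\FM{\ke}\circ\iota^!$.

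For part (ii), I would use that the quasi-inverse of the equivalence $\FM{\widetilde\ke}$ is itself a Fourier--Mukai functor with perfect kernel $\widetilde\kq\in\Dp(X_2\times X_1,-J(v_2)\boxplus v_1)$, satisfying $\widetilde\kq\circ\widetilde\ke\cong\ko_{\Delta_{X_1}}$ and $\widetilde\ke\circ\widetilde\kq\cong\ko_{\Delta_{X_2}}$ in the appropriate twisted categories. Applying $\mathbf{L}\iota^*$ to both identities, one uses that $\mathbf{L}\iota^*$ intertwines kernel convolution (a triple-product instance of the projection-formula and flat-base-change facts from (i), this time on $X_1\times X_2\times X_1$) and that $\mathbf{L}\iota^*\ko_{\Delta_{X_i}}\cong\ko_{\Delta_{X_i}}$ (the structure sheaf of the diagonal is flat over $\CC[\epsilon]/(\epsilon^2)$). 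This produces $\kq\circ\ke\cong\ko_{\Delta_{X_1}}$ and $\ke\circ\kq\cong\ko_{\Delta_{X_2}}$ with $\kq:=\mathbf{L}\iota^*\widetilde\kq$, so $\FM{\ke}$ is an equivalence with quasi-inverse $\FM{\kq}$. The main technical obstacle is setting up projection formula, flat base change and compatibility of $\mathbf{L}\iota^*$ with kernel convolution in the deformed twisted setting; these are familiar for ordinary schemes but must be rechecked in Toda's model, after which everything above becomes formal.
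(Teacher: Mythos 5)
Your first isomorphism in (i) is exactly the paper's argument: flat base change along $p_1$ to get $p_1^*\iota_*\cong\iota_*p_1^*$, the projection formula together with perfectness of $\widetilde\ke$ to pull $\iota_*$ past the tensor product, and commutation of $\iota_*$ with $\mathbf{R}(p_2)_*$. For the second isomorphism, however, the paper does not take adjoints; it repeats the same direct computation with $\iota^!$ in place of $\iota_*$, using $\iota^!\mathbf{R}(p_2)_*\cong\mathbf{R}(p_2)_*\iota^!$, $\iota^!(\widetilde\ke\lotimes -)\cong\ke\lotimes\iota^!(-)$ (perfectness again) and $\iota^!p_1^*\cong p_1^*\iota^!$. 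Your route presupposes that $\FM{\widetilde\ke}$ sits in an adjoint triple of Fourier--Mukai functors with the expected kernels \emph{in the deformed categories}; that is not formal here --- it is Grothendieck--Serre duality for infinitesimal deformations, which this paper only establishes in the Appendix. So for (i) your plan can be made to work, but it needs that nontrivial input, which the paper's direct computation deliberately avoids.

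The genuine gap is in (ii). You start from the assertion that the quasi-inverse of $\FM{\widetilde\ke}$ is a Fourier--Mukai functor with perfect kernel $\widetilde\kq$ satisfying $\widetilde\kq\circ\widetilde\ke\cong(\Delta_{X_1})_*\ko_{X_1}^{(\beta_1,\gamma_1)}$. In the paper this is precisely Proposition \ref{prop:inverse}, which is proved \emph{after} Proposition \ref{prop:FM} and whose proof uses Proposition \ref{prop:FM}(ii) and Theorem \ref{thm:Toda} (itself relying on part (i)); assuming it here is circular. Even granting that the quasi-inverse is of Fourier--Mukai type, passing from an isomorphism of functors $\FM{\widetilde\kq}\circ\FM{\widetilde\ke}\cong\id$ to an isomorphism of convolved kernels (note also that the identity kernel is the deformed diagonal, not $\ko_{\Delta_{X_1}}$, which is only its restriction under $\mathbf{L}\iota^*$) requires a ``kernels are determined by their functors'' statement that fails for general infinitesimal deformations: the Appendix has to work around the absence of a perfect spanning class, and Proposition \ref{prop:inverse} instead uses the parametrization of lifts of a kernel by $\Ext^1(\kp,\kp)$ to \emph{choose} a lift with the correct convolution. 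The paper's actual proof of (ii) needs none of this: it proves full faithfulness of $\FM{\ke}$ by testing $\Hom$'s against the objects $\iota^!\widetilde\ki$ for $\widetilde\ki$ injective in $\qcoh(X_1,v_1)$, which (co)generate $\qcoh(X_1)$, transporting these $\Hom$'s through $\FM{\widetilde\ke}$ via part (i) and the adjunction $(\iota_*,\iota^!)$; essential surjectivity is obtained the same way, using a quasi-inverse $F$ of $\FM{\widetilde\ke}$ only as a functor, with no kernel attached to it. You would need either to reproduce that cogenerator argument or to first establish the Appendix's duality results independently of Proposition \ref{prop:FM}.
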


\begin{proof}
The fact that $\FM{\widetilde\ke}\circ\iota_*\cong\iota_*\circ\FM{\ke}$ was already remarked in \cite[Thm.\ 4.7]{Toda} and it is an easy application of the projection formula and flat base change (which in this context hold true as remarked in \cite{Toda}). Indeed, for any $\kf\in\Db(X_1)$,
\[
\begin{split}
\FM{\widetilde\ke}(\iota_*(\kf))&=\mathbf{R}(p_2)_*(\widetilde\ke\lotimes p_1^*(\iota_*(\kf)))\cong\mathbf{R}(p_2)_*(\widetilde\ke\lotimes\iota_*(p_1^*(\kf)))\\
&\cong\mathbf{R}(p_2)_*\iota_*(\mathbf{L}\iota^*\widetilde\ke\lotimes p_1^*(\kf))\cong\iota_*\mathbf{R}(p_2)_*(\ke\lotimes p_1^*(\kf))=\iota_*(\FM{\ke}(\kf)).
\end{split}
\]
For the second one, we have that, for any $\kf\in\Db(X_1,v_1)$,
\[
\begin{split}
\iota^!(\FM{\widetilde\ke}(\kf))&=\iota^!\mathbf{R}(p_2)_*(\widetilde\ke\lotimes p_1^*(\kf))\\
&\cong\mathbf{R}(p_2)_*(\iota^!(\widetilde\ke\lotimes p_1^*(\kf)))\\
&\cong\mathbf{R}(p_2)_*(\ke\lotimes \iota^!p_1^*(\kf))\\
&\cong\mathbf{R}(p_2)_*(\ke\lotimes p_1^*(\iota^!\kf))\\
&=\FM{\ke}(\iota^!(\kf)),
\end{split}
\]
where we used, as before, flat base change, the projection formula and the fact that $\widetilde\ke$ is a perfect complex.

\medskip

To prove (ii), observe first that, as an easy consequence of
\cite[Cor.\ 3.3, Lemma 4.3]{Toda}, the category $\qcoh(X_i,v_i)$
has enough injectives. Moreover, $\iota^!\widetilde\ki$ is
injective, for all injective objects
$\widetilde\ki\in\qcoh(X_i,v_i)$, and the pull-backs of the
injective objects in $\qcoh(X_i,v_i)$ via $\iota^!$ (co)generate
$\qcoh(X_i)$ (actually all injective objects in $\qcoh(X_i)$ are
of the form $\iota^!\widetilde\ki$, for some injective
$\widetilde\ki$, by \cite[Cor.\ 6.15]{LvdB2}).

For $\kf\in\Db(X_1)$ and an injective $\widetilde\ki\in\Db(X_1,v_1)$, we have the following chain of isomorphisms below
\[
\begin{split}
\Hom(\kf,\iota^!\widetilde\ki)&\cong\Hom(\iota_*\kf,\widetilde\ki)\\&\cong\Hom(\FM{\widetilde\ke}(\iota_*\kf),\FM{\widetilde\ke}(\widetilde\ki))\\&\cong\Hom(\iota_*\FM{\ke}(\kf),\FM{\widetilde\ke}(\widetilde\ki))\\
&\cong\Hom(\FM{\ke}(\kf),\iota^!(\FM{\widetilde\ke}(\widetilde\ki)))\\&\cong\Hom(\FM{\ke}(\kf),\FM{\ke}(\iota^!\widetilde\ki)),
\end{split}
\]
where the first and forth isomorphism are obtained by adjunction, the second one is simply the action of $\FM{\widetilde\ke}$, while the third and the last one are consequences of (i).
It is easy to check that the composition of all these isomorphisms is the action of $\FM{\ke}$.
Since the objects $\iota^!\widetilde\ki$ (co)generate the category $\qcoh(X_1)$, the previous calculation shows that $\FM{\ke}$ is fully-faithful.

Let $F:\Db(X_2,v_2)\to\Db(X_1,v_1)$ be a quasi-inverse of $\FM{\widetilde\ke}$. By (i) and adjunction, we have $F\circ\iota_*\cong\iota_*\circ\FM{\ke_L}$. Thus, for any $\kf\in\Db(X_2)$ and any injective $\widetilde\ki\in\qcoh(X_2,v_2)$,
\[
\begin{split}
\Hom(\kf,\iota^!\widetilde\ki)&\cong\Hom(\iota_*\kf,\widetilde\ki)\\&\cong\Hom(F(\iota_*\kf),F(\widetilde\ki))\\
&\cong\Hom(\iota_*\FM{\ke_L}(\kf),F(\widetilde\ki))\\&\cong\Hom(\FM{\widetilde\ke}(\iota_*\FM{\ke_L}(\kf)),\widetilde\ki)\\
&\cong\Hom(\iota_*\FM{\ke}(\FM{\ke_L}(\kf)),\widetilde\ki)\\&\cong\Hom(\FM{\ke}(\FM{\ke_L}(\kf)),\iota^!\widetilde\ki).
\end{split}
\]
As before, the objects $\iota^!\widetilde\ki$ (co)generate the category $\qcoh(X_2)$. Therefore we have an isomorphism $\kf\cong\FM{\ke}(\FM{\ke_L}(\kf))$ which is what we wanted. An alternative proof can be obtained using Serre duality as explained in Appendix \ref{sec:Appendix}.
\end{proof}

The following result is the key ingredient in understanding the relation between first order deformations of varieties and deformations of kernels of Fourier--Mukai equivalences.

\begin{thm}\label{thm:Toda} {\bf (Toda)}
    Let $X_1$ and $X_2$ be smooth projective varieties and let $v_i\in\HH^2(X_i)$. Assume that there exists a Fourier--Mukai equivalence $\FM{\ke}:\Db(X_1)\isomor\Db(X_2)$ with kernel $\ke\in\Db(X_1\times X_2)$. Then there exists an object $\widetilde\ke\in\Dp(X_1\times X_2,-J(v_1)\boxplus v_2)$ giving rise to a Fourier--Mukai equivalence $\FM{\widetilde\ke}:\Db(X_1,v_1)\isomor\Db(X_2,v_2)$ and such that $\ke\cong\mathbf{L}\iota^*\widetilde\ke$ if and only if $(\FM{\ke})^\HH(v_1)=v_2$.
\end{thm}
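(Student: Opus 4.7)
The plan is to translate the existence of a deformation $\widetilde\ke$ of $\ke$ into a vanishing statement for an obstruction class in $\Ext^2(\ke,\ke)$, and then to identify that obstruction explicitly via the two actions of Hochschild cohomology on $\ke$ coming from its two factors. The equivalence of these two actions, transported through the Fourier--Mukai kernel, is exactly $(\FM{\ke})^\HH$, and this is what allows the translation of the obstruction into the Hochschild cohomology equation.

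More precisely, as indicated in Remark \ref{rmk:Lowen} and using Lowen's obstruction theory for deformations of perfect complexes, there exists a lift $\widetilde\ke\in\Dp(X_1\times X_2,w)$ of $\ke$, with $w:=-J(v_1)\boxplus v_2\in\HH^2(X_1\times X_2)$, if and only if the class
$$w\cdot\id_\ke\in\Ext^2(\ke,\ke)$$
obtained by contracting $w$ against the universal Atiyah class of $\ke$ vanishes. When the obstruction vanishes, the set of lifts is a torsor under $\Ext^1(\ke,\ke)$, and any lift automatically satisfies $\mathbf{L}\iota^*\widetilde\ke\cong\ke$ by construction.

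The key computation is the identification of this obstruction. Since $\FM{\ke}$ is an equivalence, Morita-type arguments provide canonical graded isomorphisms
$$\HH^*(X_1)\isomor\Ext^*(\ke,\ke)\isomor\HH^*(X_2),$$
the first defined by $u\mapsto p_1^*u\cdot\id_\ke$ and the second by $u\mapsto p_2^*u\cdot\id_\ke$. By the very definition of the action on Hochschild cohomology recalled in Section \ref{subsec:Hoch}, the composition of these two isomorphisms is precisely $(\FM{\ke})^\HH$, once one accounts for the $J$-twist on the source side: the contraction of $p_1^*u$ against $\ke$ involves (through the left adjoint kernel $\ke_L=\ke\dual\otimes p_2^*S_{X_2}$) a Serre kernel which on the level of Hochschild--Kostant--Rosenberg components is precisely the involution $J$ reversing the sign of the $H^1(X_1,\kt_{X_1})$-part. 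Consequently, under the identification $\Ext^*(\ke,\ke)\cong\HH^*(X_2)$, the contribution of $-p_1^*J(v_1)$ to the obstruction is $-(\FM{\ke})^\HH(v_1)$ (since $J^2=\id$) while that of $p_2^*v_2$ is $v_2$, so the obstruction becomes $v_2-(\FM{\ke})^\HH(v_1)$, vanishing exactly when $(\FM{\ke})^\HH(v_1)=v_2$.

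Assuming the Hochschild equality, pick any lift $\widetilde\ke$; to check that $\FM{\widetilde\ke}$ is an equivalence, apply the same obstruction analysis to the quasi-inverse kernel $\kp$. The equality $(\FM{\kp})^\HH(v_2)=v_1$ follows from $(\FM{\ke})^\HH(v_1)=v_2$ since $(\FM{\kp})^\HH=((\FM{\ke})^\HH)^{-1}$, yielding a lift $\widetilde\kp$. The convolutions $\widetilde\ke\circ\widetilde\kp$ and $\widetilde\kp\circ\widetilde\ke$ are then lifts of $\ko_{\Delta_{X_2}}$ and $\ko_{\Delta_{X_1}}$ in the appropriate deformed categories. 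By choosing $\widetilde\kp$ suitably within the $\Ext^1(\kp,\kp)$-torsor of lifts, these convolutions can be arranged to coincide with the canonical lifts of the diagonals, making $\FM{\widetilde\ke}$ an equivalence with quasi-inverse $\FM{\widetilde\kp}$. The ``only if'' direction is immediate: if $\widetilde\ke$ exists, the obstruction has vanished, forcing the Hochschild equality by the computation above. The main obstacle is the careful identification of the $J$-twist in the third paragraph, which requires a detailed bookkeeping through the Hochschild--Kostant--Rosenberg decomposition and the Serre kernel, ideally streamlined using the compatibilities established in Section \ref{sec:2} (Theorem \ref{thm:main2} and Remark \ref{rmk:HochCohom}).
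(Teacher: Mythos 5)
Your route is genuinely different from the paper's. The paper treats the two implications asymmetrically: the existence direction is taken verbatim from \cite[Thm.\ 4.7]{Toda} as a black box, while the ``only if'' direction is proved categorically --- set $v_3:=(\FM{\ke})^\HH(v_1)$, lift the quasi-inverse kernel $\kp$ to $\widetilde\kp\in\Dp(X_2\times X_1,-J(v_3)\boxplus v_1)$ using the first part, observe via Proposition \ref{prop:FM}(i) that $\FM{\widetilde\ke}\circ\FM{\widetilde\kp}$ preserves $\iota_*\coh(X_2)$ and hence identifies $\coh(X_2,v_3)$ with $\coh(X_2,v_2)$ as deformations of $\coh(X_2)$, and conclude $v_2=v_3$ from the Lowen--Van den Bergh classification of such deformations by $\HH^2(X_2)$. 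You instead propose to prove both implications at once by identifying the obstruction class of $\ke$ in the direction $-J(v_1)\boxplus v_2$ with $v_2-(\FM{\ke})^\HH(v_1)$ under the characteristic isomorphism $\HH^2(X_2)\isomor\Ext^2(\ke,\ke)$; granting that identification, both directions do follow (the ``only if'' one because that characteristic morphism is injective for an equivalence). This is essentially how Toda proves his theorem, so your plan re-derives the cited result rather than invoking it.

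The problem is that the identification you rely on is precisely the nontrivial content, and your proposal asserts it rather than establishes it: the claim that the composite $\HH^*(X_1)\isomor\Ext^*(\ke,\ke)\isomor\HH^*(X_2)$ of the two characteristic morphisms equals $(\FM{\ke})^\HH$, and that the involution $J$ on the $H^1(X_1,\kt_{X_1})$-component is accounted for by the Serre kernel in $\ke_L$, is exactly the ``detailed bookkeeping'' you defer as the main obstacle. Nothing in Section \ref{sec:2} supplies it, since Theorem \ref{thm:main2} concerns the action on Hochschild \emph{homology}, not the characteristic morphism on cohomology. A second, smaller gap: arranging $\widetilde\ke\circ\widetilde\kp$ to be the canonical lift of $\ko_{\Delta_{X_2}}$ only produces a one-sided inverse; to conclude that $\FM{\widetilde\ke}$ is an equivalence you must also produce $\widetilde\kp'$ with $\widetilde\kp'\circ\widetilde\ke$ the canonical lift of $\ko_{\Delta_{X_1}}$ (using the $\Ext^1(\kp,\kp)$-torsor for the other composition functor) and then identify $\widetilde\kp\cong\widetilde\kp'$; this is the content of Proposition \ref{prop:inverse} and cannot be waved through here. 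If you wish to avoid the obstruction computation altogether, the paper's composition-plus-classification argument gives the ``only if'' direction without it.
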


\begin{proof}
The existence of $\widetilde\ke$, given a Fourier--Mukai equivalence $\FM{\ke}$ such that $(\FM{\ke})^\HH(v_1)=v_2$, is precisely \cite[Thm.\ 4.7]{Toda}.
For the proof of the other implication, we use an argument suggested to us by Y.\ Toda. Assume that $(\FM{\ke})^\HH(v_1)=v_3$. Let $\kp\in\Db(X_2\times X_1)$ be the kernel of a quasi-inverse of $\FM{\ke}$. By the first part of the theorem, there exists $\widetilde\kp\in\Dp(X_2\times X_1,-J(v_3)\boxplus v_1)$ giving rise to an equivalence $\FM{\widetilde\kp}:\Db(X_2,v_3)\to\Db(X_1,v_1)$. The composition $G:=\FM{\widetilde\ke}\circ\FM{\widetilde\kp}$ induces an equivalence $\coh(X_2,v_3)\to\coh(X_2,v_2)$ (as deformations of $\coh(X_2)$). Indeed, by Proposition \ref{prop:FM}(i), $G(\iota_*\coh(X_2))\subseteq\iota_*\coh(X_2)$. Since the abelian categories $\coh(X_2,v_2)$ and $\coh(X_2,v_3)$ are generated by $\iota_*\coh(X_2)$ by extensions, $G$ yields the desired equivalence. But now, by \cite[Thm.\ 3.1]{LvdB1}, all the deformations of $\coh(X_2)$ are parametrized by $\HH^2(X_2)$. Thus $v_2=v_3$.
\end{proof}

\subsection{Infinitesimal Mukai lattices and the proof of Theorem \ref{thm:main1}}\label{subsec:proof2}

Let $X$ be a K3 surface and let $v\in\HH^2(X)$. Let $w:=I_K^{X}(\sigma_X)+\epsilon I_K^{X}(v\circ \sigma_X)\in\Htil(X,\ZZ)\otimes\CC[\epsilon]/(\epsilon^2)$, where $\sigma_X$ is a generator for $\HH_{2}(X)$ as a $\CC$-vector space. Here $\Htil(X,\ZZ)$ is the \emph{Mukai lattice} of $X$, i.e.\ the $\ZZ$-module $H^*(X,\ZZ)$ endowed with the (generalized) Mukai pairing and the weight-$2$ Hodge structure
\[
\Htil^{2,0}(X):=H^{2,0}(X)\qquad\Htil^{0,2}(X):=\overline{\Htil^{2,0}(X)}\qquad\Htil^{1,1}(X):=(\Htil^{2,0}(X)\oplus\Htil^{0,2}(X))^\perp.
\]

\begin{definition}\label{def:Mukailattice}
    The free $\ZZ[\epsilon]/(\epsilon^2)$-module of finite rank $\Htil(X,\ZZ)\otimes\ZZ[\epsilon]/(\epsilon^2)$ endowed with the $\ZZ[\epsilon]/(\epsilon^2)$-linear extension of the generalized Mukai pairing $\langle-,-\rangle_M$ and such that $\Htil(X,\ZZ)\otimes\CC[\epsilon]/(\epsilon^2)$
    has the weight-$2$ decomposition
    \[
    \begin{split}
    &\Htil^{2,0}(X,v):=\CC[\epsilon]/(\epsilon^2)\cdot w\\
    &\Htil^{0,2}(X,v):=\overline{\Htil^{2,0}(X,v)}\\
    &\Htil^{1,1}(X,v):=(\Htil^{2,0}(X,v)\oplus\Htil^{0,2}(X,v))^\perp,
    \end{split}
    \]
    is the \emph{infinitesimal Mukai lattice of $X$ with respect to $v$}, which is denoted by $\Htil(X,v,\ZZ)$.
\end{definition}

\begin{remark}\label{rmk:period}
(i) It is easy to see that $w$ behaves like a honest period of a K3 surface. More precisely,
\[
\langle w,w\rangle_M=0\qquad\langle w,\overline w\rangle_M>0,
\]
where we do not distinguish between the Mukai pairing \eqref{eqn:Mukai} and its $\ZZ[\epsilon]/(\epsilon^2)$-linear extension. Hence the weight-$2$ decomposition in the previous definition can be thought of as the analogue of the weight-$2$ Hodge decomposition on $\Htil(X,\ZZ)$ appearing in the classical Derived Torelli Theorem (see, for example, \cite{HFM} for the classical case and \cite{HS} for the twisted setting).

(ii) Going back to the motivation described in the introduction, suppose the morphsim $p:\mathrm{DF}_1\to\mathrm{DF}_2$ of functors is given. Theorem \ref{thm:main1} would then state the equivariance of the differential of $p$ with respect to the action of Fourier--Mukai equivalences on the first order versions of the functors $\mathrm{DF}_1$ and $\mathrm{DF}_2$.
\end{remark}

\medskip

Given this analogy, for two K3 surfaces $X_1$ and $X_2$, and $v_i\in\HH^2(X_i)$, a \emph{Hodge isometry} of the infinitesimal Mukai lattices is a $\ZZ[\epsilon]/(\epsilon^2)$-linear isomorphism $g:\Htil(X_1,v_1,\ZZ)\isomor\Htil(X_2,v_2,\ZZ)$ preserving the Mukai pairing and the weight-$2$ decomposition in the previous definition. In the rest of this paper, we will be more interested in the infinitesimal isometries $g=g_0\otimes\ZZ[\epsilon]/(\epsilon^2)$, where $g_0$ is (automatically) an Hodge isometry of the standard Mukai lattices $\Htil(X_1,\ZZ)\isomor\Htil(X_2,\ZZ)$. These special infinitesimal Hodge isometries will be called \emph{effective}.

\medskip

The lattice $\widetilde H(X_i,\ZZ)$ has some interesting
substructures. Indeed, let $\sigma_i$ be a generator of $H^{2,0}(X_i)$
and $\omega_i$ a K{\"a}hler class. Then
\begin{eqnarray}\index{Positive four-space}
P(X_i,\sigma_i,\omega_i):=\langle{\rm Re}(\sigma_i), {\rm
Im}(\sigma_i),1-\omega_i^2/2,\omega_i\rangle,
\end{eqnarray}
is a positive four-space in $\widetilde H(X_i,\RR)$ (here ${\rm
Re}(\sigma_i)$ and ${\rm Im}(\sigma_i)$ are the real and imaginary
part of $\sigma_i$). It comes, by the choice of basis, with a
natural orientation. An effective Hodge isometry $g=g_0\otimes\ZZ[\epsilon]/(\epsilon^2):\Htil(X_1,v_1,\ZZ)\isomor\Htil(X_2,v_2,\ZZ)$ is
\emph{orientation preserving} if $g_0$ preserves the orientation
of $P(X,\sigma_i,\omega_i)$, for $i=1,2$. For $X=X_1=X_2$ and
$v=v_1=v_2$, the group of orientation preserving effective Hodge
isometries is denoted by $\OO_+(\Htil(X,v,\ZZ))$.

\bigskip

\noindent {\it Proof of Theorem \ref{thm:main1}.} We first prove that (i) implies (ii). Let $\FM{\widetilde\ke}:\Db(X_1,v_1)\isomor\Db(X_2,v_2)$ be an equivalence with kernel $\widetilde\ke\in\Dp(X_1\times X_2,-J(v_1)\boxplus v_2)$. By Proposition \ref{prop:FM}(ii), $\ke:=\mathbf{L}\iota^*\widetilde\ke\in\Db(X_1\times X_2)$ is the kernel of a Fourier--Mukai equivalence $\FM{\ke}:\Db(X_1)\isomor\Db(X_2)$. Corollary 7.9 in \cite{HMS1} implies that the Hodge isometry $g_0:=(\FM{\ke})_H:\Htil(X_1,\ZZ)\to\Htil(X_2,\ZZ)$ is orientation preserving. Moreover, by Theorem \ref{thm:Toda}, since $\widetilde\ke$ is a first order deformation of $\ke$, $(\FM{\ke})^\HH(v_1)=v_2$.

Consider the diagram
\begin{equation}\label{eqn:diagrammabello}
    \xymatrix{
    \HH^*(X_1)\ar[rr]^{(\FM{\ke})^\HH}\ar[d]_{(-)\circ\sigma_{X_1}}&&\HH^*(X_2)\ar[d]^{(-)\circ(\FM{\ke})_\HH(\sigma_{X_1})}\\
    \HH_*(X_1)\ar[rr]^{(\FM{\ke})_\HH}\ar[d]_{I_K^{X_1}}&&\HH_*(X_2)\ar[d]^{I_K^{X_2}}\\
    \Htil(X_1,\CC)\ar[rr]^{(\FM{\ke})_H}&&\Htil(X_2,\CC),
    }
\end{equation}
where, as before, $\sigma_{X_1}$ is a generator of $\HH_2(X_1)$ as a $\CC$-vector space and $(-)\circ(-):\HH^*(X_i)\times\HH_*(X_i)\to\HH_*(X_i)$ denotes the action of $\HH^*(X_i)$ on $\HH_*(X_i)$ (see, for example, \cite{Cal1}). The upper square is commutative by, for example, \cite[Remark 6.3]{HFM}, while the commutativity of the bottom one is Theorem \ref{thm:main2}.
Thus
\[
g_0(I_K^{X_1}(v_1\circ\sigma_{X_1}))=I_K^{X_2}(v_2\circ((I_K^{X_1})^{-1}\circ g_0\circ I_K^{X_1})(\sigma_{X_1})).
\]
In particular, $g:=g_0\otimes
\ZZ[\epsilon]/(\epsilon^2):\Htil(X_1,v_1,\ZZ)\to\Htil(X_2,v_2,\ZZ)$
is an effective orientation preserving Hodge isometry of the
infinitesimal Mukai lattices.

The fact that (ii) implies (i) is shown as follows. Let $g=g_0\otimes\ZZ[\epsilon]/(\epsilon^2)$ be as in (ii), with $g_0:\Htil(X_1,\ZZ)\to\Htil(X_2,\ZZ)$ an orientation preserving Hodge isometry. By the classical Derived Torelli Theorem \cite{Mu,Or1}, there exists a Fourier--Mukai equivalence $\FM{\ke}:\Db(X_1)\isomor\Db(X_2)$ with kernel $\ke\in\Db(X_1\times X_2)$ and such that $g_0=(\FM{\ke})_H$.

Under our assumptions, the commutativity of diagram \eqref{eqn:diagrammabello} gives $(\FM{\ke})^\HH(v_1)=v_2$. Therefore, by Theorem \ref{thm:Toda}, there exists a first order deformation $\widetilde\ke\in\Dp(X_1\times X_2,-J(v_1)\boxplus v_2)$ of $\ke$ such that the Fourier--Mukai functor $\FM{\widetilde\ke}:\Db(X_1,v_1)\isomor\Db(X_2,v_2)$ is an equivalence.\hfill$\Box$

\bigskip

Theorem \ref{thm:main1} is precisely the classical result by Mukai
and Orlov \cite{Mu,Or1} if $v_1$ and $v_2$ in the statement are
trivial. In particular, under this assumption, condition (ii) can
be relaxed avoiding the orientation preserving requirement. This
is no longer true when $v_1$ and $v_2$ are non-trivial, as
explained in the example below.

\begin{ex}\label{ex:orientpres}
Let $X$ be a K3 surface with $\Pic(X)=\ZZ H$ and $H^2>2$. Take $v\in\HH^2(X)$ such that $w:=I_K^X(v\circ\sigma_X)=(\sqrt{2},\sqrt{3}H,\sqrt{5})\in\Htil^{1,1}(X)$, where $\sigma_X$ is a generator of $\HH_{2}(X)$. The Hodge isometry $j:=\id_{(H^0\oplus H^4)(X,\ZZ)}\oplus(-\id_{H^2(X,\ZZ)})$ does not preserve the orientation and maps $w$ to $w':=(\sqrt{2},-\sqrt{3}H,\sqrt{5})$. Let $v'\in\HH^2(X)$ be such that $w'=I_K^X(v'\circ\sigma_X)$. Then $\Db(X,v)$ is not Fourier--Mukai equivalent to $\Db(X,v')$.

Indeed, suppose for a contradiction that they are Fourier--Mukai equivalent. By Theorem \ref{thm:main1}, there is an orientation preserving Hodge isometry $g$ of $\Htil(X,\ZZ)$ such that $g(v)=\alpha v'$, for some root of unity $\aa$. But
\[
g(\sqrt{2},\sqrt{3}H,\sqrt{5})=(a_1\sqrt{2}+a_2\sqrt{3}+a_3\sqrt{5},(b_1\sqrt{2}+b_2\sqrt{3}+b_3\sqrt{5})H,c_1\sqrt{2}+c_2\sqrt{3}+c_3\sqrt{5}),
\]
with $a_i,b_i,c_i\in\ZZ$, $i=1,2,3$. An easy computation shows that $g(v)=\alpha v'$ only if $g$ restricted to $H^0(X,\ZZ)\oplus\Pic(X)\oplus H^4(X,\ZZ)$ is such that $g(x,yH,z)=\pm(x,-yH,z)$. By \cite[Lemma 4.1]{Og}, $g|_{T(X)}=\pm\id$ and hence by \cite[Thm.\ 1.6.1, Cor.\ 1.5.2]{Ni}, $g=\pm j$, which is a contradiction.

Notice that in such a case, no object in $\Db(X)$ deforms to an object in $\Db(X,v)$, or to one in $\Db(X,v')$.
\end{ex}

\begin{remark}\label{rmk:FMpartners}
    Given a K3 surface $X$, the number of isomorphism classes of K3 surfaces with derived category equivalent to $\Db(X)$ is finite (see \cite{BM}). The same result holds true in the broader case of twisted K3 surfaces (see \cite[Cor.\ 4.6]{HS}).

    As first order deformations of $X$ are parametrized by an affine space over $\HH^2(X)$ this cannot be true in the deformed setting. Hence one could weaken the notion of isomorphism between deformed K3 surfaces following \cite{HS}, requiring that $(X_1,v_1)$ and $(X_2,v_2)$ (where $X_i$ is a K3 surface and $v_i\in\HH^2(X_i)$) are \emph{equivalent deformations} if there exists an isomorphism $f:X_1\to X_2$ such that $f^*v_2=v_1$. Unfortunately, the number of Fourier--Mukai partners of a deformed K3 surface $(X,v)$ remains infinite even for this equivalence relation.

    Indeed, let $X$ be a K3 surface containing infinitely many smooth rational curves $\{C_i\}_{i\in\NN}$ and take $v\in\HH^2(X)$ such that $w:=I_K^X(v\circ\sigma_X)=(1,H,1)\in\Htil^{1,1}(X)$, for $H\in\Pic(X)$ an ample line bundle and $\sigma_X$ a generator of $\HH_2(X)$. If $s_i$ is the Hodge isometry of the total cohomology group of $X$ which acts as the reflection in the class of the rational curve $C_i$, then for any $r\in\NN$ $w_r:=(s_r\circ\ldots\circ s_1)(w)$ yields pairwise non-equivalent deformations $(X,v_r)$, with $v_r\in\HH^2(X)$, such that $\Db(X,v)\cong\Db(X,v_r)$ (by Theorem \ref{thm:main1}). The same example shows that the number of Fourier--Mukai partners is infinite, even if we declare the deformations $(X_1,v_1)$ and $(X_2,v_2)$ (with $X_i$ and $v_i\in\HH^2(X_i)$ as before) to be isomorphic if there exists an
equivalence $(L\otimes(-))\circ f^*:\coh(X_2,v_2)\isomor\coh(X_1,v_1)$, for some
isomorphism $f:X_1\isomor X_2$ and some line bundle $L\in\Pic(X_1)$.
\end{remark}

\subsection{Fourier--Mukai functors and the group of autoequivalences}\label{subsec:auto}

In this section we generalize a few properties of Fourier--Mukai functors to the case of deformed categories with the aim of specializing Theorem \ref{thm:main1} to autoequivalences.

Take $X_1$, $X_2$ and $X_3$ smooth projective varieties and $v_i\in\HH^2(X_i)$, where $i=1,2,3$ and $I^\mathrm{HKR}_{X_i}(v_i)=(\alpha_i,\beta_i,\gamma_i)$. Let $\widetilde\ke\in\Dp(X_1\times X_2,-J(v_1)\boxplus v_2)$ and $\widetilde\kf\in\Dp(X_2\times X_3,-J(v_2)\boxplus v_3)$. Define
\[
\widetilde\kf\circ\widetilde\ke:=\mathbf{R}(p_{13})_*(p_{12}^*\widetilde\ke\lotimes p_{23}^*\widetilde\kf)\in\Db(X_1\times X_3,-J(v_1)\boxplus v_3),
\]
where $p_{ij}$ are the natural projections from $X_1\times X_2\times X_3$. To unravel the definition, observe that, for $\{i,j\}\in\{\{1,2\},\{2,3\}\}$, one has
$p_{ij}^*:\Db(X_i\times X_j,-J(v_i)\boxplus v_j)\to\Db(X_1\times X_2\times X_3,w_{ij})$,
where
\[
\begin{split}
I^\mathrm{HKR}_{X_1\times X_2\times X_3}(w_{12})&=(p_{12}^*(-\alpha_1\boxplus\alpha_2),\beta_1\boxplus\beta_2\boxplus\beta_3,-\gamma_1\boxplus\gamma_2\boxplus 0)\\
I^\mathrm{HKR}_{X_1\times X_2\times X_3}(w_{23})&=(p_{23}^*(-\alpha_2\boxplus\alpha_3),\beta_1\boxplus\beta_2\boxplus\beta_3,0\boxplus-\gamma_2\boxplus\gamma_3).
\end{split}
\]
Moreover, we can tensor $p_{12}^*\widetilde\ke$ and $p_{23}^*\widetilde\kf$, seen respectively as objects in the derived categories of $q_2^{-1}\ko_{X_2}^{(\beta_2,\gamma_2)}$-modules and $q_2^{-1}\ko_{X_2}^{(\beta_2,-\gamma_2)}$-modules, where $q_2:X_1\times X_2\times X_3\to X_2$ is the projection. Such a tensor product takes naturally values in the derived category of $p_{13}^{-1}\ko_{X_1\times X_3}^{(\beta_1\boxplus\beta_3,-\gamma_1\boxplus\gamma_3)}$-modules. Hence, by \cite{Toda}, we can apply the functor $\mathbf{R}(p_{13})_*$. In this argument we did not take care of the twist because it behaves nicely with respect to the various operations.

\begin{lem}\label{lem:Kuz}
    Under the above assumptions, $\widetilde\kg:=\widetilde\kf\circ\widetilde\ke\in\Dp(X_1\times X_3,-J(v_1)\boxplus v_3)$.
\end{lem}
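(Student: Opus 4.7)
The proof splits naturally into two parts: verifying that $\widetilde\kg$ lives in the twisted category $\Db(X_1\times X_3,-J(v_1)\boxplus v_3)$, and then verifying that it is perfect. The twist bookkeeping is largely done in the paragraph immediately preceding the lemma: when one tensors $p_{12}^*\widetilde\ke$ and $p_{23}^*\widetilde\kf$, the $X_2$-contributions of the $\alpha$- and $\gamma$-parts of $w_{12}$ and $w_{23}$ cancel, leaving a complex of twisted modules over $p_{13}^{-1}\ko^{(\beta_1\boxplus\beta_3,-\gamma_1\boxplus\gamma_3)}_{X_1\times X_3}$ with cocycle contribution $-\alpha_1\boxplus\alpha_3$ on the base. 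This is exactly the twist corresponding to $-J(v_1)\boxplus v_3$, and pushing forward by $\mathbf{R}(p_{13})_*$ preserves it by the functoriality statements of \cite{Toda}.

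For perfectness I would trace the claim through the three operations in the definition of $\widetilde\kg$. The pullbacks by the flat projections $p_{12}$ and $p_{23}$ preserve perfectness, since they send a strict perfect complex (bounded, locally free of finite rank) over the relevant twisted algebra to one of the same form. The derived tensor product of two perfect complexes over the flat sheaves of algebras $q_2^{-1}\ko^{(\beta_2,\pm\gamma_2)}_{X_2}$ described before the lemma is again perfect. The only nontrivial step is that $\mathbf{R}(p_{13})_*$ sends the resulting perfect complex on $X_1\times X_2\times X_3$ to a perfect complex on $X_1\times X_3$; since $p_{13}$ is proper and smooth with fibers $X_2$, this is the deformed analogue of the classical fact that proper smooth pushforward preserves perfection.

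The cleanest route to this last step is reduction modulo $\epsilon$. Using Proposition \ref{prop:FM}(i) together with the projection formula and flat base change (available in this context by \cite{Toda}), one checks the isomorphism
\[
\mathbf{L}\iota^*\widetilde\kg \;\cong\; \kf\circ\ke,
\]
where $\ke:=\mathbf{L}\iota^*\widetilde\ke$ and $\kf:=\mathbf{L}\iota^*\widetilde\kf$ are perfect complexes on the classical products. The right-hand side is perfect on $X_1\times X_3$ by the classical composition-of-kernels lemma, so $\mathbf{L}\iota^*\widetilde\kg$ is perfect. Since $\ko^{(\beta_1\boxplus\beta_3,-\gamma_1\boxplus\gamma_3)}_{X_1\times X_3}$ is flat over $\CC[\epsilon]/(\epsilon^2)$ with reduction $\ko_{X_1\times X_3}$, and $\widetilde\kg$ is a bounded complex with coherent cohomology, a local strict perfect presentation of $\mathbf{L}\iota^*\widetilde\kg$ lifts to one of $\widetilde\kg$ by a Nakayama-type argument, so $\widetilde\kg$ is itself perfect.

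The main obstacle I expect is this last transfer of perfectness from the classical complex $\kf\circ\ke$ to the deformed complex $\widetilde\kg$; everything else is routine bookkeeping with the twisted sheaves of algebras and the formalism of \cite{Toda}. One could in principle argue directly on affine patches by choosing locally free resolutions of $\widetilde\ke$ and $\widetilde\kf$ and computing the pushforward by a \v{C}ech-type argument, but the $\iota$-reduction approach above is conceptually cleaner and matches the techniques used elsewhere in the paper (notably in the proof of Proposition \ref{prop:FM}).
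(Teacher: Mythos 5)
Your proposal is correct and follows essentially the same route as the paper: the heart of the argument is the isomorphism $\mathbf{L}\iota^*\widetilde\kg\cong\kf\circ\ke$ (obtained from the commutation $\mathbf{L}\iota^*\mathbf{R}(p_{13})_*\cong\mathbf{R}(p_{13})_*\mathbf{L}\iota^*$, which the paper takes from \cite{Toda} and Lemma \ref{lemma:kuz}), after which perfection of $\widetilde\kg$ is deduced from its classical reduction. The paper phrases the reduction criterion as ``it suffices that $\mathbf{L}\iota^*\widetilde\kg$ be bounded,'' which on the smooth variety $X_1\times X_3$ is equivalent to the perfection-plus-Nakayama-lifting formulation you give.
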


\begin{proof}
Let $\ke:=\mathbf{L}\iota^*\widetilde\ke$ and $\kf:=\mathbf{L}\iota^*\widetilde\kf$. To prove that $\widetilde\kg$ is perfect it is sufficient to show that $\mathbf{L}\iota^*\widetilde\kg$ is bounded. This is an easy consequence of the following isomorphisms
\[
\begin{split}
    \mathbf{L}\iota^*\widetilde\kg&=\mathbf{L}\iota^*\mathbf{R}(p_{13})_*(p_{12}^*\widetilde\ke\lotimes p_{23}^*\widetilde\kf)\\
    &\cong\mathbf{R}(p_{13})_*\mathbf{L}\iota^*(p_{12}^*\widetilde\ke\lotimes p_{23}^*\widetilde\kf)\\
    &\cong\mathbf{R}(p_{13})_*\mathbf{L}(p_{12}^*\ke\lotimes p_{23}^*\kf)\\
    &=\kf\circ\ke,
\end{split}
\]
where the natural isomorphism $\mathbf{L}\iota^*\mathbf{R}(p_{13})_*\cong\mathbf{R}(p_{13})_*\mathbf{L}\iota^*$ was already observed in the proof of Lemma 6.5 in \cite{Toda}. Such an isomorphism will be further clarified in Appendix \ref{sec:Appendix} (see Lemma \ref{lemma:kuz}).
\end{proof}

Due to this result, the composition of the Fourier--Mukai functors $\FM{\widetilde\kf}$ and $\FM{\widetilde\ke}$ is again a Fourier--Mukai functor with kernel $\widetilde\kf\circ\widetilde\ke$. In the case of non-deformed derived categories, the inverse of any Fourier--Mukai equivalence is a again of Fourier--Mukai type. This fact needs to be proved in the first order deformation case.

\begin{prop}\label{prop:inverse}
    If $X_1$ and $X_2$ are smooth projective varieties, $v_i\in\HH^2(X_i)$ and $$\FM{\widetilde\ke}:\Db(X_1,v_1)\isomor\Db(X_2,v_2)$$ is a Fourier--Mukai equivalence with $\widetilde\ke\in\Dp(X_1\times X_2,-J(v_1)\boxplus v_2)$, then the inverse is a Fourier--Mukai functor.
\end{prop}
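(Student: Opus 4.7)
The plan is to combine Theorem~\ref{thm:Toda} with the classical fact that inverses of Fourier--Mukai equivalences are of Fourier--Mukai type, and then to correct the resulting candidate kernel within its torsor of first-order deformations so that the composition with $\widetilde\ke$ becomes the identity.

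First I would set $\ke := \mathbf{L}\iota^*\widetilde\ke$. By Proposition~\ref{prop:FM}(ii), $\FM{\ke}\colon\Db(X_1)\isomor\Db(X_2)$ is an equivalence, and the ``only if'' direction of Theorem~\ref{thm:Toda} applied to $\widetilde\ke$ yields $(\FM{\ke})^{\HH}(v_1)=v_2$. Let $\kp\in\Db(X_2\times X_1)$ be the kernel of the classical quasi-inverse $\FM{\kp}\cong\FM{\ke}^{-1}$; functoriality of the $\HH^*$-action under composition then forces $(\FM{\kp})^{\HH}(v_2)=v_1$. Applying the ``if'' direction of Theorem~\ref{thm:Toda} to $\FM{\kp}$ produces an object $\widetilde\kp\in\Dp(X_2\times X_1,-J(v_2)\boxplus v_1)$ with $\mathbf{L}\iota^*\widetilde\kp\cong\kp$ such that $\FM{\widetilde\kp}\colon\Db(X_2,v_2)\isomor\Db(X_1,v_1)$ is a Fourier--Mukai equivalence.

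Next, by Lemma~\ref{lem:Kuz}, $G:=\FM{\widetilde\kp}\circ\FM{\widetilde\ke}$ is of Fourier--Mukai type with kernel $\widetilde\kg:=\widetilde\kp\circ\widetilde\ke\in\Dp(X_1\times X_1,-J(v_1)\boxplus v_1)$. Proposition~\ref{prop:FM}(i) combined with $\FM{\kp}\circ\FM{\ke}\cong\id_{\Db(X_1)}$ shows that $G\circ\iota_*\cong\iota_*$, so $\mathbf{L}\iota^*\widetilde\kg\cong\ko_{\Delta_{X_1}}$. Hence $\widetilde\kg$ is a first-order deformation of the diagonal in the same direction as the ``canonical'' identity kernel $\ko_{\widetilde\Delta_{X_1}}$ which represents $\id_{\Db(X_1,v_1)}$.

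The last step, which is the main obstacle, is to promote the isomorphism $\mathbf{L}\iota^*\widetilde\kg\cong\mathbf{L}\iota^*\ko_{\widetilde\Delta_{X_1}}$ to an isomorphism $\widetilde\kg\cong\ko_{\widetilde\Delta_{X_1}}$, or equivalently to show $G\cong\id$. By the obstruction theory recalled in Remark~\ref{rmk:Lowen}, the set of deformations of $\ko_{\Delta_{X_1}}$ in the direction $-J(v_1)\boxplus v_1$ is an affine space over $\Ext^1(\ko_{\Delta_{X_1}},\ko_{\Delta_{X_1}})$, and the analogous set of deformations of $\kp$ in the direction $-J(v_2)\boxplus v_1$ is a torsor under $\Ext^1(\kp,\kp)$. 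Since $\FM{\ke}$ is an equivalence and $\widetilde\ke$ is perfect, composition with $\widetilde\ke$ induces a bijection between these two torsors, so we can translate $\widetilde\kp$ within its $\Ext^1(\kp,\kp)$-torsor of deformations to cancel the discrepancy between $\widetilde\kg$ and $\ko_{\widetilde\Delta_{X_1}}$. The delicate point will be verifying this torsor identification rigorously, in particular checking that the obstruction-theoretic affine structure on deformations of $\kp$ matches the one on deformations of $\ko_{\Delta_{X_1}}$ under the map ``compose with $\widetilde\ke$'' in the deformed setting; this is where one genuinely uses the perfectness hypothesis together with the flat base change and projection formula in $\qcoh(X_1\times X_2,-J(v_1)\boxplus v_2)$ employed in the proofs of Lemma~\ref{lem:Kuz} and Proposition~\ref{prop:FM}. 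Once this is done, the corrected $\FM{\widetilde\kp}$ is the desired Fourier--Mukai quasi-inverse.
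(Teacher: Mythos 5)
Your proposal is correct and follows essentially the same route as the paper: lift the classical quasi-inverse kernel $\kp$ via Theorem~\ref{thm:Toda}, use the $\Ext^1(\kp,\kp)$-parametrization of its first-order deformations from \cite{Lowen2}, and exploit the bijection of deformation spaces induced by composition with $\widetilde\ke$ to adjust $\widetilde\kp$ so that the composite kernel becomes the canonical identity kernel $(\Delta_{X_1})_*\ko_{X_1}^{(\beta_1,\gamma_1)}$. The only (immaterial) difference is that you compose on the other side, producing a left quasi-inverse on the $X_1$-factor where the paper uses $\widetilde\ke\circ(-)$ to produce a right quasi-inverse on the $X_2$-factor; also note that $\mathbf{L}\iota^*(\widetilde\kp\circ\widetilde\ke)\cong\kp\circ\ke\cong\ko_{\Delta_{X_1}}$ follows directly from the computation in the proof of Lemma~\ref{lem:Kuz}, without needing to pass through $G\circ\iota_*\cong\iota_*$.
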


\begin{proof}
This result can be easily proved using Serre duality as in Appendix \ref{sec:Appendix} (see Corollary \ref{cor:twisted}). Nevertheless we can also argue as follows. Observe first that, as an easy application of the projection formula, the identity $\Db(X_i,v_i)\to\Db(X_i,v_i)$ is a Fourier--Mukai functor whose kernel is $(\Delta_{X_i})_*\ko_{X_i}^{(\beta_i,\gamma_i)}$, where $I^{\mathrm{HKR}}_{X_i}(v_i)=(\alpha_i,\beta_i,\gamma_i)$. Notice that $\mathbf{L}\iota^*(\Delta_{X_i})_*\ko_{X_i}^{(\beta_i,\gamma_i)}\cong\ko_{\Delta_{X_i}}$.

Let $\kp\in\Db(X_2\times X_1)$ be the kernel of the inverse of the equivalence $\FM{\ke}$ (see Proposition \ref{prop:FM}(ii)), where $\ke:=\mathbf{L}\iota^*\widetilde\ke$. By Theorem \ref{thm:Toda}, there exists at least a $\widetilde\kp\in\Dp(X_2\times X_1,-J(v_2)\boxplus v_1)$ such that $\kp\cong\mathbf{L}\iota^*\widetilde\kp$. By \cite{Lowen2}, all such kernels are parametrized by $\Ext^1(\kp,\kp)$.

The functor $\widetilde\ke\circ(-):\Dp(X_2\times X_1,-J(v_2)\boxplus v_1)\to\Dp(X_2\times X_2,-J(v_2)\boxplus v_2)$ induces an isomorphism $\Ext^1(\kp,\kp)\isomor\Ext^1(\ko_{\Delta_{X_2}},\ko_{\Delta_{X_2}})$ and hence a one-to-one correspondence between deformations of $\kp$ and $\ko_{\Delta_{X_2}}$.
Therefore, by the previous computation, there exists a $\widetilde\kp$ such that $\widetilde\ke\circ\widetilde\kp\cong(\Delta_{X_2})_*\ko_{X_2}^{(\beta_2,\gamma_2)}$.
\end{proof}

As a consequence, for a smooth projective variety $X$ and $v\in\HH^2(X)$, the set $\Aut^\mathrm{FM}(\Db(X,v))$ of all autoequivalences of Fourier--Mukai type of $\Db(X,v)$ is actually a group.

As remarked in the introduction, when $X$ is a K3 surface, Theorem \ref{thm:main1} can be read in terms of the existence of a surjective group homorphism
\[
\xymatrix{\Pi_{(X,v)}:\Aut^\mathrm{FM}(\Db(X,v))\ar@{->>}[r]&\OO_+(\Htil(X,v,\ZZ))},
\]
where $\OO_+(\Htil(X,v,\ZZ))$ denotes the group of orientation preserving effective Hodge isometries. If $v=0$, the kernel of $\Pi_{(X,0)}$ consists of all autoequivalences acting trivially on cohomology and, according to Conjecture 1.2 in \cite{B}, should be described as the fundamental group of a period domain naturally associated to a connected component of the manifold parametrizing stability conditions on $\Db(X)$.

The relation with the case $v\neq 0$ is clarified by the following easy result.

\begin{lem}\label{lem:kernel}
    If $X$ is a K3 surface and $v\in\HH^2(X)$, then $\ker(\Pi_{(X,v)})\cong\ker(\Pi_{(X,0)})$.
\end{lem}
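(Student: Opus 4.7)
The plan is to exhibit an explicit group isomorphism $\rho:\ker(\Pi_{(X,v)})\to\ker(\Pi_{(X,0)})$ induced by $\mathbf{L}\iota^*$. I would set $\rho([\FM{\widetilde\ke}]):=[\FM{\ke}]$, where $\ke:=\mathbf{L}\iota^*\widetilde\ke$. Proposition \ref{prop:FM}(ii) guarantees that $\FM{\ke}$ is an equivalence of $\Db(X)$, and the identity $\mathbf{L}\iota^*(\widetilde\kf\circ\widetilde\ke)\cong\kf\circ\ke$ isolated in the proof of Lemma \ref{lem:Kuz} shows that $\rho$ respects composition. That $\rho$ lands in $\ker(\Pi_{(X,0)})$ is tautological: if $g_0\otimes\ZZ[\epsilon]/(\epsilon^2)=\id$, then its zeroth-order part $g_0=(\FM{\ke})_H$ is the identity of $\Htil(X,\ZZ)$.

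For surjectivity, start from $[\FM{\ke}]\in\ker(\Pi_{(X,0)})$, so $(\FM{\ke})_H=\id$. Theorem \ref{thm:main2} then forces $(\FM{\ke})_\HH=\id$ on $\HH_*(X)$. Applying the upper square of diagram \eqref{eqn:diagrammabello} (commutative by Remark \ref{rmk:HochCohom}) to $v\in\HH^2(X)$ yields
$$(\FM{\ke})^\HH(v)\circ\sigma_X=(\FM{\ke})_\HH(v\circ\sigma_X)=v\circ\sigma_X.$$
Since $X$ is a K3 surface and $\sigma_X$ is a nowhere-vanishing section of $\omega_X$, the action $(-)\circ\sigma_X:\HH^2(X)\to\HH_0(X)$ is the Calabi--Yau contraction isomorphism (on each HKR summand it is contraction with a trivialization of $\omega_X$). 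Therefore $(\FM{\ke})^\HH(v)=v$, and Theorem \ref{thm:Toda} produces a lift $\widetilde\ke\in\Dp(X\times X,-J(v)\boxplus v)$ whose associated Fourier--Mukai autoequivalence lies in $\ker(\Pi_{(X,v)})$ and satisfies $\rho([\FM{\widetilde\ke}])=[\FM{\ke}]$.

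For injectivity, suppose $\rho([\FM{\widetilde\ke}])$ is trivial, so $\ke\cong\ko_{\Delta_X}$. By Remark \ref{rmk:Lowen}, the set of isomorphism classes of first-order deformations of $\ko_{\Delta_X}$ in $\Dp(X\times X,-J(v)\boxplus v)$ is a torsor over $\Ext^1(\ko_{\Delta_X},\ko_{\Delta_X})\cong\HH^1(X)$. For a K3 surface this group vanishes, since $H^0(T_X)=0$ (no holomorphic vector fields) and $H^1(\ko_X)=0$. Hence $\widetilde\ke$ is forced to be isomorphic to the canonical lift $(\Delta_X)_*\ko_X^{(\beta,\gamma)}$ highlighted in the proof of Proposition \ref{prop:inverse}, which induces the identity autoequivalence of $\Db(X,v)$. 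The main subtlety is precisely this step: the argument relies crucially on the K3 vanishing $\HH^1(X)=0$, without which non-trivial ``identity lifts'' could in principle produce autoequivalences in $\ker(\Pi_{(X,v)})$ that become invisible in $\ker(\Pi_{(X,0)})$.
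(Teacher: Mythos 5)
Your proof is correct and follows essentially the same route as the paper: the map induced by $\mathbf{L}\iota^*$, surjectivity via Theorem \ref{thm:Toda}, and injectivity from the vanishing of $\Ext^1(\ke,\ke)\cong\HH^1(X)$ for a K3 surface. You usefully spell out two points the paper leaves implicit, namely that $(-)\circ\sigma_X$ is invertible so that $(\FM{\ke})_\HH=\id$ forces $(\FM{\ke})^\HH(v)=v$ (the hypothesis needed to apply Theorem \ref{thm:Toda}), and that $\rho$ is a group homomorphism via the compatibility of $\mathbf{L}\iota^*$ with convolution.
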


\begin{proof}
Any autoequivalence $\FM{\widetilde\ke}$ is in $\ker(\Pi_{(X,v)})$ if and only if $\Pi_{(X,v)}(\FM{\widetilde\ke})=\id\otimes\ZZ[\epsilon]/(\epsilon^2)$. In this case, by Proposition \ref{prop:FM}(ii) $\ke:=\mathbf{L}\iota^*\widetilde\ke$ is the kernel of a Fourier--Mukai equivalence which, by the proof of Theorem \ref{thm:main1}, acts trivially on cohomology. Hence, there exists a morphism $\kappa:\ker(\Pi_{(X,v)})\to\ker(\Pi_{(X,0)})$ sending $\widetilde\ke$ to $\ke$, which is surjective by Theorem \ref{thm:Toda}. By \cite{Lowen2}, given $\ke\in\Db(X\times X)$, all the $\widetilde\ke\in\Dp(X\times X, -J(v)\boxplus v)$ such that $\ke=\mathbf{L}\iota^*\widetilde\ke$ form an affine space over $\Ext^1(\ke,\ke)$ which, in the case of K3 surfaces, is trivial. Thus $\kappa$ is an isomorphism.
\end{proof}

\section{Further examples}\label{sec:applications}

Let $X$ be a smooth projective variety with an action of a finite
group $G$. We denote by $\coh_G(X)$ the abelian category of
$G$-equivariant coherent sheaves on $X$, i.e.\ the category whose
objects are pairs $(\ke,\{\lambda_g\}_{g\in G})$, where
$\ke\in\coh(X)$ and, for any $g_1,g_2\in G$,
$\lambda_{g_i}:\ke\isomor g_i^*\ke$ is an isomorphism such that
$\lambda_{g_1g_2}=g_2^*(\lambda_{g_1})\circ\lambda_{g_2}$. The set
of these isomorphisms is a \emph{$G$-linearization} of $\ke$ (very
often a $G$-linearization will be simply denoted by $\lambda$).
The morphisms in $\coh_G(X)$ are just the morphisms of coherent
sheaves compatible with the $G$-linearizations. We put
$\Db_G(X):=\Db(\coh_G(X))$. Since $G$ is finite, $\Db_G(X)$ can
equivalently be described in terms of $G$-equivariant objects in
$\Db(X)$ (see, for example, \cite[Sect.\ 1.1]{Pl}). For later use,
we recall the definition of the functor $\Inf_G:\Db(X)\to\Db_G(X)$
\[
\Inf_G(\ke):=\left(\bigoplus_{g\in G}g^*\ke,\lambda_\mathrm{nat}\right),
\]
where $\lambda_\mathrm{nat}$ is the natural $G$-linearization.

\subsection{Kummer surfaces}\label{subsec:Kummer} Let now $A$ be an abelian surface and denote by $\Km(A)$ the corresponding Kummer surface, i.e.\ the minimal resolution of the quotient of $A$ by the natural involution $\varpi:A\to A$, with $\varpi(a)=-a$. Denote by $G\cong\ZZ/2\ZZ$ the group generated by $\varpi$. The main result in \cite{BKR} shows that there exists a Fourier--Mukai equivalence $\Psi_A:\Db_G(A)\isomor\Db(\Km(A))$. The composition $\Pi_A:=\Psi_A\circ\Inf_G:\Db(A)\to\Db(\Km(A))$ is of Fourier--Mukai type and induces a morphism
\[
(\Pi_A)_\HH:\HH_*(A)\to\HH_*(\Km(A)).
\]
Thus, given $v\in\HH^2(A)$, we get $\pi(v,\sigma_A):=(\Pi_A)_\HH(v\circ\sigma_A)\circ(\Pi_A)_\HH(\sigma_A)^{-1}\in\HH^2(\Km(A))$, where $\sigma_A$ is a generator of $\HH_2(A)$.

Just as the classical Derived Torelli Theorem does, the infinitesimal version in Theorem \ref{thm:main1} holds true for abelian surfaces as well. From this we deduce a relation between the deformations of Fourier--Mukai equivalences in the case of abelian surfaces and the ones for the corresponding Kummer surfaces.

\begin{prop}\label{prop:appl1}
    Let $A_1$ and $A_2$ be abelian surfaces and let $v_i\in\HH^2(A_i)$, with $i=1,2$, be such that there exists a Fourier--Mukai equivalence $$\FM{\widetilde\ke}:\Db(A_1,v_1)\isomor\Db(A_2,v_2).$$ Then $\Db(\Km(A_1),\pi(v_1,\sigma_{A_1}))$ and $\Db(\Km(A_2),\pi(v_2,(\FM{\ke})_\HH(\sigma_{A_1})))$ are Fourier--Mukai equivalent, where $\ke:=\mathbf{L}\iota^*\widetilde\ke\in\Db(A_1\times A_2)$.
\end{prop}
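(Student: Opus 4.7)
The plan is to apply Theorem \ref{thm:main1} to the K3 surfaces $\Km(A_i)$: it suffices to produce a Fourier--Mukai equivalence $\Phi : \Db(\Km(A_1)) \isomor \Db(\Km(A_2))$ for which $\Phi^\HH(\pi(v_1, \sigma_{A_1})) = \pi(v_2, (\FM{\ke})_\HH(\sigma_{A_1}))$, and then invoke Theorem \ref{thm:Toda}. The underlying FM equivalence $\FM{\ke} : \Db(A_1) \isomor \Db(A_2)$ (with $\ke := \mathbf{L}\iota^*\widetilde\ke$) is provided by Proposition \ref{prop:FM}(ii), and Theorem \ref{thm:Toda} applied to $\widetilde\ke$ guarantees $(\FM{\ke})^\HH(v_1) = v_2$.

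The decisive geometric input is to lift $\FM{\ke}$ to a $G$-equivariant FM equivalence $\widehat{\Phi} : \Db_G(A_1) \isomor \Db_G(A_2)$ satisfying $\widehat{\Phi} \circ \Inf_G \cong \Inf_G \circ \FM{\ke}$, where $G := \langle\varpi\rangle$. For abelian surfaces this is folklore: any such FM equivalence decomposes into standard building blocks (shifts, automorphism-pushforwards, line-bundle twists, and Poincar\'e--Mukai transforms), each of which admits a natural $G$-equivariant structure with respect to the involution $\pm 1$. Conjugating by the BKR equivalences $\Psi_{A_i}$ then yields $\Phi := \Psi_{A_2} \circ \widehat{\Phi} \circ \Psi_{A_1}^{-1}$, and by construction $\Phi \circ \Pi_{A_1} \cong \Pi_{A_2} \circ \FM{\ke}$. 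Functoriality of Hochschild homology turns this into the identity $\Phi_\HH \circ (\Pi_{A_1})_\HH = (\Pi_{A_2})_\HH \circ (\FM{\ke})_\HH$; evaluating it at $v_1\circ\sigma_{A_1}$ and using the $\HH^*$-module structure on $\HH_*$ together with $(\FM{\ke})^\HH(v_1) = v_2$ (cf.\ the upper square of diagram \eqref{eqn:diagrammabello}) reduces, after the usual unravelling of the definition of $\pi$, to $\Phi^\HH(\pi(v_1, \sigma_{A_1})) = \pi(v_2, (\FM{\ke})_\HH(\sigma_{A_1}))$. Theorem \ref{thm:Toda} then produces the desired first-order deformation of the kernel of $\Phi$.

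The main obstacle is the equivariant lifting step. A careful specification of the $G\times G$-linearisation on a general FM kernel between abelian surfaces, and verification of its compatibility with the BKR equivalences, is the only non-formal part of the argument. An alternative route bypassing this is to combine the abelian-surface analogue of Theorem \ref{thm:main1} (yielding an orientation-preserving Hodge isometry $g_0:\widetilde H(A_1,\ZZ)\isomor\widetilde H(A_2,\ZZ)$) with the classical Hodge-isometric relation between $\widetilde H(\Km(A),\QQ)$ and $\widetilde H(A,\QQ)$ to construct directly an orientation-preserving Hodge isometry of the Kummer Mukai lattices; the classical Derived Torelli Theorem then supplies $\Phi$, whose compatibility with $\pi$ on $\HH^*$ follows from Theorem \ref{thm:main2}.
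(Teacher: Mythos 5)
Your overall architecture---descend the equivalence through the derived McKay correspondence to an equivalence of the Kummer surfaces, verify the required identity on Hochschild (co)homology, and conclude with Theorem \ref{thm:Toda}---is exactly the paper's. The gap is in the step you yourself flag as the main obstacle: it is \emph{not} true that an arbitrary Fourier--Mukai equivalence $\FM{\ke}:\Db(A_1)\isomor\Db(A_2)$ admits a $G$-equivariant lift $\widehat{\Phi}$ with $\widehat{\Phi}\circ\Inf_G\cong\Inf_G\circ\FM{\ke}$. Among your ``standard building blocks,'' pushforward along translation by a non-$2$-torsion point and twisting by a non-symmetric line bundle do not commute with the involution $\varpi$, so their kernels carry no $(\varpi\times\varpi)$-linearization; hence the original $\ke$ need not be invariant. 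What is true---and this is what the paper invokes from \cite[Prop.\ 3.4]{MMS}---is that one may replace $\ke$ by a possibly \emph{different} kernel $\kg\in\Db(A_1\times A_2)$ of an equivalence satisfying $(\varpi\times\varpi)^*\kg\cong\kg$ and $(\FM{\kg})_H=(\FM{\ke})_H$, whence $(\FM{\kg})_\HH=(\FM{\ke})_\HH$ by Theorem \ref{thm:main2}. Since the conclusion only involves the induced maps on Hochschild homology (both in the identity $(\FM{\kg})^\HH(v_1)=v_2$ and in the final formula for $\pi(v_2,(\FM{\ke})_\HH(\sigma_{A_1}))$), this substitution is harmless, and the rest of your computation goes through verbatim with $\kg$ in place of $\ke$; the descent to the Kummer surfaces is then \cite[Sect.\ 3.2]{Pl}, which is your BKR conjugation.

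Your proposed alternative route is not viable as stated: the classical relation between $\Htil(\Km(A),\QQ)$ and $\Htil(A,\QQ)$ is only a rational (and index-$2$-sublattice) statement on the transcendental parts, and does not by itself produce an integral orientation-preserving Hodge isometry $\Htil(\Km(A_1),\ZZ)\isomor\Htil(\Km(A_2),\ZZ)$, nor does it control the image of the Hochschild class $\pi(v_1,\sigma_{A_1})$, which is defined through $(\Pi_{A_1})_\HH$ and not through singular cohomology alone. The equivariant route, with the kernel-replacement fix above, is the one that works.
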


\begin{proof}
The same argument as in \cite[Prop.\ 3.4]{MMS} shows that there exists $\kg\in\Db(A_1\times A_2)$ giving rise to an equivalence $\FM{\kg}:\Db(A_1)\isomor\Db(A_2)$ such that $(\varpi\times\varpi)^*\kg\cong\kg$ and $(\FM{\kg})_H=(\FM{\ke})_H$. Hence, by Theorem \ref{thm:main2}, the fact that $(\FM{\kg})_\HH=(\FM{\ke})_\HH$ and the commutativity of the diagram
\[
\xymatrix{
    \HH^*(A_1)\ar[rr]^{(\FM{\kg})^\HH}\ar[d]_{(-)\circ\sigma_{A_1}}&&\HH^*(A_2)\ar[d]^{(-)\circ(\FM{\kg})_\HH(\sigma_{A_1})}\\
    \HH_*(A_1)\ar[rr]^{(\FM{\kg})_\HH}&&\HH_*(A_2),
    }
\]
we have $(\FM{\kg})^\HH(v_1)=v_2$.

By the discussion in \cite[Sect. 3.2]{Pl} and Theorem \ref{thm:main2}, there exists an object $\kf\in\Db(\Km(A_1)\times\Km(A_2))$ inducing an equivalence $\FM{\kf}:\Db(\Km(A_1))\isomor\Db(\Km(A_2))$ and making commutative the following diagram
\[
\xymatrix{\HH_*(A_1)\ar[rrr]^{(\FM{\kg})_\HH}\ar[ddd]_{(\Pi_{A_1})_\HH}\ar[dr]_{I_K^{A_1}}&&&\HH_*(A_2)\ar[ddd]^{(\Pi_{A_2})_\HH}\ar[dl]^{I_K^{A_2}}\\
&\Htil(A_1,\CC)\ar[d]\ar[r]^{(\FM{\kg})_H}&\Htil(A_2,\CC)\ar[d]&\\
&\Htil(\Km(A_1),\CC)\ar[r]^{(\FM{\kf})_H}&\Htil(\Km(A_2),\CC)&\\
\HH_*(\Km(A_1))\ar[rrr]^{(\FM{\kf})_\HH}\ar[ur]^{I_K^{\Km(A_1)}}&&&\HH_*(\Km(A_2))\ar[ul]_{I_K^{\Km(A_2)}}.
}
\]
The commutativity of \eqref{eqn:diagrammabello}, for $X_i=\Km(A_i)$, and Theorem \ref{thm:Toda} yield the following chain of equalities
\[
\begin{split}
(\FM{\kf})^\HH(\pi(v_1,\sigma_{A_1}))&=(\FM{\kf})^\HH((\Pi_{A_1})_\HH(v_1\circ\sigma_{A_1})\circ(\Pi_{A_1})_\HH(\sigma_{A_1})^{-1})\\
&=(\FM{\kf})_\HH((\Pi_{A_1})_\HH(v_1\circ\sigma_{A_1}))\circ((\FM{\kf})_\HH((\Pi_{A_1})_\HH(\sigma_{A_1})))^{-1}\\
&=(\Pi_{A_2})_\HH((\FM{\kg})_\HH(v_1\circ\sigma_{A_1}))\circ((\Pi_{A_2})_\HH((\FM{\kg})_\HH(\sigma_{A_1})))^{-1}\\
&=(\Pi_{A_2})_\HH(v_2\circ(\FM{\kg})_\HH(\sigma_{A_1}))\circ((\Pi_{A_2})_\HH((\FM{\kg})_\HH(\sigma_{A_1})))^{-1}\\
&=(\Pi_{A_2})_\HH(v_2\circ(\FM{\ke})_\HH(\sigma_{A_1}))\circ((\Pi_{A_2})_\HH((\FM{\ke})_\HH(\sigma_{A_1})))^{-1}\\
&=\pi(v_2,(\FM{\ke})_\HH(\sigma_{A_1})).
\end{split}
\]
Theorem \ref{thm:Toda} concludes the proof.
\end{proof}

In general, even when we consider deformations of Kummer surfaces induced by those of the corresponding abelian surfaces, the converse of the previous result it is not expected to hold true.

\subsection{Enriques surfaces}\label{subsec:Enriques}

Let $Y$ be an Enriques surface, i.e.\ a minimal smooth projective surface with $2$-torsion canonical bundle $\omega_Y$ and $H^1(Y,\ko_Y)=0$. The universal cover $\pi:X\to Y$ is a K3 surface and it carries a fixed-point-free involution $\varpi:X\to X$ such that $Y=X/G$, where $G=\langle\varpi\rangle$.
In this special setting, $\coh(Y)$ is naturally isomorphic to the abelian category $\coh_G(X)$ which yields an equivalence $\Db(Y)\cong\Db_G(X)$, which will be tacitly meant for what follows.

Notice that, by functoriality, since $\pi$ is an \'etale morphism, we have an induced morphism $\pi^*:\HH^*(Y)\to\HH^*(X)$ which is compatible with the Hochschild--Kostant--Rosenberg isomorphism.

\begin{prop}\label{prop:Enriques}
Let $Y_1$ and $Y_2$ be Enriques surfaces, $\pi_i:X_i\to Y_i$ be their universal covers, and $v_i\in\HH^2(Y_i)$, for $i=1,2$. Then the following are equivalent:
\begin{itemize}
    \item[(i)] There exists a Fourier--Mukai equivalence $$\FM{\widetilde\ke}:\Db(Y_1,v_1)\isomor\Db(Y_2,v_2)$$ with $\widetilde\ke\in\Dp(Y_1\times Y_2,-J(v_1)\boxplus v_2)$.
    \item[(ii)] There exists an orientation preserving effective Hodge isometry $$g:\Htil(X_1,\pi_1^*(v_1),\ZZ)\isomor\Htil(X_2,\pi_2^*(v_2),\ZZ)$$ which is $G$-equivariant, i.e.\ $\varpi^*\circ g=g\circ\varpi^*$.
    \item[(iii)] There exists a Fourier--Mukai equivalence $$\FM{\widetilde\kf}:\Db(X_1,\pi_1^*(v_1))\isomor\Db(X_2,\pi_2^*(v_2))$$ with $\widetilde\kf\in\Dp(X_1\times X_2,-J(\pi_1^*(v_1))\boxplus \pi_2^*(v_2))$ such that $(\varpi\times\varpi)^*\kf\cong\kf$, where $\kf:=\mathbf{L}\iota^*\widetilde\kf$.
    \end{itemize}
\end{prop}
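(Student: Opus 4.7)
The plan is to establish (ii)$\Leftrightarrow$(iii)$\Leftrightarrow$(i) by passing to the K3 universal covers and reducing to Theorem \ref{thm:main1} applied with the pulled-back classes $\pi_i^*v_i\in\HH^2(X_i)$; the Enriques datum will be encoded as a $G$-equivariance condition at each of the three levels. For (iii)$\Rightarrow$(ii), the condition $(\varpi\times\varpi)^*\kf\cong\kf$ implies that $\FM{\kf}$ commutes with $\varpi^*$ up to natural isomorphism, hence the cohomological action $g_0:=(\FM{\kf})_H$ satisfies $g_0\circ\varpi^*=\varpi^*\circ g_0$. Applying Theorem \ref{thm:main1} to $\widetilde\kf$ produces the orientation preserving effective Hodge isometry $g:=g_0\otimes\ZZ[\epsilon]/(\epsilon^2)$ between the infinitesimal Mukai lattices, and $G$-equivariance survives the $\ZZ[\epsilon]/(\epsilon^2)$-linear extension. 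For (ii)$\Rightarrow$(iii), Theorem \ref{thm:main1} produces some kernel $\widetilde\kf$ realizing $g$, but $\kf:=\mathbf{L}\iota^*\widetilde\kf$ need not be $(\varpi\times\varpi)^*$-invariant; I would argue, as in \cite[Prop.\ 3.4]{MMS}, that $\kf$ and $(\varpi\times\varpi)^*\kf$ induce the same action on cohomology (because $g_0$ is $G$-equivariant), and then use the finiteness of FM kernels inducing a given cohomological action together with a modification argument to replace $\kf$ by a $G$-invariant kernel of an equivalence, and finally invoke Theorem \ref{thm:Toda} to lift it back into the infinitesimal setting.

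For (iii)$\Leftrightarrow$(i), the input is the equivalence $\coh(Y_i)\cong\coh_G(X_i)$, which extends to the deformed setting since $v_i\in\HH^2(Y_i)$ maps under the \'etale pullback $\pi_i^*$ to a $G$-invariant class in $\HH^2(X_i)$, giving $\coh(Y_i,v_i)\cong\coh_G(X_i,\pi_i^*v_i)$ as first order deformations. By the equivariant formalism of \cite{Pl} reviewed at the start of Section \ref{sec:applications}, FM kernels between $\Db(Y_1,v_1)$ and $\Db(Y_2,v_2)$ correspond to $G\times G$-equivariant kernels on $X_1\times X_2$ in the appropriate twisted perfect derived category; the underlying non-equivariant object of any such kernel is automatically $(\varpi\times\varpi)^*$-invariant, and conversely any $(\varpi\times\varpi)^*$-invariant kernel admits a natural $G\times G$-equivariant lift via an induction construction. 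The equivalence property is preserved in both directions thanks to Proposition \ref{prop:inverse}, yielding (iii)$\Leftrightarrow$(i).

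The main obstacle will be the implication (ii)$\Rightarrow$(iii): producing a $(\varpi\times\varpi)^*$-invariant FM kernel starting from an abstract $G$-equivariant Hodge isometry. Theorem \ref{thm:main1} only determines the kernel up to the finite ambiguity of \cite{BM}, so there is no automatic way to select a $G$-invariant representative, and the modification argument of \cite{MMS} must be carried out carefully in the twisted first-order setting. A secondary point to check is that first-order deformations of $\coh(Y_i)$ are indeed parametrized by $\HH^2(Y_i)$ and that the pullback $\pi_i^*:\HH^2(Y_i)\to\HH^2(X_i)$ is compatible with the deformation functor; this follows from the \'etaleness of $\pi_i$ and the functoriality of the Hochschild--Kostant--Rosenberg isomorphism already noted in the introduction to Section \ref{subsec:Enriques}.
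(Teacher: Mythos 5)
Your handling of (ii)$\Leftrightarrow$(iii) matches the paper's: both reduce to Theorem \ref{thm:main1} combined with the $G$-equivariant refinement of the Derived Torelli Theorem from \cite[Prop.\ 3.4]{MMS}, which is precisely the tool that resolves the difficulty you flag about producing a $(\varpi\times\varpi)^*$-invariant kernel realizing a prescribed $G$-equivariant Hodge isometry. The problem lies in your route for (i)$\Leftrightarrow$(iii). You propose to carry Ploog's equivariant formalism directly into the deformed setting, via an asserted equivalence $\coh(Y_i,v_i)\cong\coh_G(X_i,\pi_i^*v_i)$ of first order deformations and a correspondence between kernels in $\Dp(Y_1\times Y_2,-J(v_1)\boxplus v_2)$ and equivariant kernels on the deformation of $X_1\times X_2$. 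Neither statement is available: \cite{Pl} is a result about ordinary derived categories, and extending descent along the \'etale covers to twisted sheaves of noncommutative $\CC[\epsilon]/(\epsilon^2)$-algebras would require a separate development (equivariant structures on objects of $\Dp(X_1\times X_2,-J(\pi_1^*v_1)\boxplus\pi_2^*v_2)$, descent of the twisted structure sheaves, and so on) that is nowhere set up. Moreover, even at the non-deformed level, a $(\varpi\times\varpi)^*$-invariant kernel does not acquire an equivariant structure by an ``induction construction'': $\Inf_G$ produces the direct sum over the group, not a linearization of the given object; what one needs is the existence of a linearization, which is exactly the content of \cite[Sect.\ 3.3]{Pl}.

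The paper sidesteps all of this by pushing everything down to the non-deformed level. Given (i), Proposition \ref{prop:FM}(ii) and Theorem \ref{thm:Toda} convert $\widetilde\ke$ into a classical equivalence $\FM{\ke}:\Db(Y_1)\isomor\Db(Y_2)$ with $(\FM{\ke})^\HH(v_1)=v_2$; Ploog's results, applied to the ordinary categories $\Db(Y_i)\cong\Db_G(X_i)$, produce a $(\varpi\times\varpi)^*$-invariant kernel $\kf$ of an equivalence $\Db(X_1)\isomor\Db(X_2)$ with $\pi_2^*\circ\FM{\ke}\cong\FM{\kf}\circ\pi_1^*$; the resulting commutative square intertwining $(\FM{\ke})^\HH$ and $(\FM{\kf})^\HH$ through $\pi_1^*$ and $\pi_2^*$ gives $(\FM{\kf})^\HH(\pi_1^*v_1)=\pi_2^*v_2$, and Theorem \ref{thm:Toda} then deforms $\kf$ to the required $\widetilde\kf$. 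The converse runs the same diagram backwards, using that $\pi_2^*$ is injective on $\HH^2$. Thus the only deformation-theoretic input is Toda's criterion, applied separately on each side; no equivariant theory of the deformed categories is needed, and without supplying such a theory your argument for (i)$\Leftrightarrow$(iii) does not close.
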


\begin{proof}
The equivalence of (ii) and (iii) is simply a rewriting of Theorem \ref{thm:main1} in the equivariant context using \cite[Prop.\ 3.4]{MMS} (or better its version for equivalences).
Now, due to \cite[Sect.\ 3.3]{Pl}, the existence of the kernel $\kf\in\Db(X_1\times X_2)$ of a Fourier--Mukai equivalence $\FM{\kf}$ such that $(\varpi\times\varpi)^*\kf\cong\kf$ is equivalent to the existence of an object $\ke\in\Db(Y_1\times Y_2)$ which gives rise to an equivalence $\FM{\ke}:\Db(Y_1)\isomor\Db(Y_2)$.
Hence, the equivalence of (i) and (iii) is a consequence of the commutativity of the following diagram
\[
\xymatrix{
    \HH^*(Y_1)\ar[rr]^{(\FM{\ke})^\HH}\ar[d]_{\pi_1^*}&&\HH^*(Y_2)\ar[d]^{\pi_2^*}\\
    \HH^*(X_1)\ar[rr]^{(\FM{\kf})^\HH}&&\HH^*(X_2).
    }
\]
This commutativity can be checked using the isomorphism of functors $\pi_2^*\circ\FM{\ke}\cong\FM{\kf}\circ\pi_1^*:\Db(Y_1)\to\Db(X_2)$, which in turn can be deduced from \cite[Sect.\ 7.3]{HFM} and \cite{Pl}.
\end{proof}

\medskip

{\small\noindent{\bf Acknowledgements.}  The final version of this
paper was written during the authors' stay at the
Hausdorff Center for Mathematics (Bonn) whose hospitality is
gratefully acknowledged. It is a pleasure to thank Daniel
Huybrechts for useful advice, Wendy Lowen and Yukinobu Toda for kindly answering our questions and the referee for helping us clarifying the motivations of the main result of this paper. We are grateful to Sukhendu Mehrotra for taking part in the discussion in which the main idea of this paper came to light and for writing the appendix. The second named author was partially supported by the Universit\`{a} degli Studi di Milano (FIRST 2004).
}

\newpage

\appendix

\section{Duality for Infinitesimal Deformations}\label{sec:Appendix}
\begin{center}
\small{by {\sc Sukhendu Mehrotra}}
\end{center}

\medskip

Let $X, Y$ be smooth and projective varieties, with
$(0,\beta,\gamma)\in\HT^2(X)$, $(0,\beta ',\gamma ')\in\HT^2(Y)$,
and suppose $f: (X, \ko_X^{(\beta,\gamma)}) \to
(Y,\ko_Y^{(\beta',\gamma')})$ is a morphism of locally ringed
spaces over $R_1:=\CC[\epsilon]/(\epsilon^2)$ (the notation here
is that of Section \ref{subsec:category}). Then, for $\aa ' \in
H^2(Y, \ko_Y)$, there is defined a push-forward functor between
twisted derived categories $\aR f_*:
\Db(\coh(\ko_X^{(\beta,\gamma)},\widetilde{f^*\alpha'})) \to
\Db(\coh(\ko_Y^{(\beta',\gamma')},\widetilde{\alpha'}))$
(\cite{Toda1}). The purpose of this appendix is to prove the
existence of a right adjoint $f^! :
\Db(\coh(\ko_Y^{(\beta',\gamma')},\widetilde{\alpha'})) \to
\Db(\coh(\ko_X^{(\beta,\gamma)},\widetilde{f^*\alpha'}))$ to $\aR
f_*$ under suitable hypotheses on $f$. More precisely, what is
proven is the existence of a ``dualizing complex.''

The setting here will be slightly more general than that of the
main article. To begin with, let $X$ denote a separated, finite
type scheme over $\CC$. By an {\em infinitesimal deformation} of
$X$, we will mean a locally ringed space $\tX=(X, \ka_X)$, where
$\ka_X$ is a sheaf of local flat $R_1$-algebras, with a fixed
isomorphism $\ka_X \otimes_{R_1} \CC \cong \ko_X$, such that
\begin{itemize}
\item[($\ast$)] \hspace{4.7cm} $0 \to \ko_X \to \ka_X \to \ko_X
\to 0$
\end{itemize}
is a central extension. In addition, we require that
the following condition be satisfied:
\begin{itemize}
\item[($\ast \ast$)] for any local section $f\in \ko_X(U)$ over an
affine open set $U$, there is a lift $\hat{f}\in \ka_X(U)$ such
that the multiplicative set $\{ \hat{f}^n : n\geq 0\}$ satisfies
the left and right Ore localization conditions (see \cite{MR},  2.1.6).
\end{itemize}
Naturally, infinitesimal deformations  then form a category in the
obvious way.

If $X$ is affine, and $M$ is a $\ka_X(X)$ module, we define the
{\em sheaf associated to} $M$ as the sheaf $M^{\sim}$ which on
principal opens $X_f$ has sections $M_{\hat{f}}$. A quick word about
this construction: If $X_g \subset X_f$,
we have that $g^n=f\alpha$, for some $n>0$ and $\alpha\in \ko_X(X)$.
Thus, by condition $(\ast)$, $\hat{g}^n$ and $\hat{f}\hat{\alpha}$ differ
by a central square-zero element, so that if the first is invertible, so is the
other. The universal property of localization (\cite{MR}) then yields a
restiction map $M_{\hat{f}} \to M_{\hat{g}}$; the uniqueness of this map
shows that as defined, $M^{\sim}$ is indeed a sheaf. The alert reader will
have noticed that we tacitly assumed a {\em choice} of a lift $\hat{f}$
for any given $f\in \ko_X(X)$ in making this definition. However, the same
reasoning as above shows that any other choice of lifts yields a
canonically isomorphic sheaf. Finally, let us mention that an important
property of this construction is that the assignment $M \mapsto M^{\sim}$ is
an exact functor (see propositions 2.1.16 (ii) and 2.1.17 (i) of \cite{MR}).

The categories $\coh(\tX):=\coh(\ka_X)$ and
$\qcoh(\tX):=\qcoh(\ka_X)$ of coherent and quasi-coherent sheaves
of {\em right} modules are defined in the usual way, in terms of
free presentations on open sets.  Using the exactness of the
associated-sheaf construction, it can easily be verified that the
analogue of Lemma 3.1 in \cite{Toda1}  holds: to wit, for any
$\kf \in \qcoh(\tX)$ and any affine open $U \subset X$, $\kf|_U =
\kf(U)^{\sim}$, the sheaf associated to the module $\kf(U)$, and
that the functor $M\mapsto M^{\sim}$ gives an equivalence
$\ka_{\tX}(U){\text{-}\cat{Mod}} \to \qcoh(\widetilde{U})$.

It is standard that the category $\mmod{\tX}$ of
$\ka_{\tX}$-modules has enough injective and flat objects
(\cite{Toda1}). So, for any morphism $f: \tX \to \tY$ of
infinitesimal deformations, the functors $f_*: \mmod{\tX} \to
\mmod{\tY}$ and $f^* : \mmod{\tY} \to \mmod{\tX}$ between
categories of right modules extend to derived functors $\aR f_* :
\D^+( \mmod{\tX}) \to \D^+ ( \mmod{\tY})$ and $\eL f^* :
\D^-(\mmod{\tY}) \to \D^-(\mmod{\tX})$. In fact, for our purposes,
we shall employ the general result  Theorem 4.5 of \cite{Sp} from
which it follows that $\aR f_*$ extends to a functor $\aR f_* :
\D( \mmod{\tX}) \to \D( \mmod{\tY})$ between {\em unbounded}
derived categories. Subtleties like the equivalence between
$\D_{\qcoh}( \mmod{\tX})$ and $\D(\qcoh(\tX))$ have been dealt
with in \cite{BN} and are not a cause for concern; it is then immediately
seen that $\aR f_*$ and $\eL f^*$ are defined between categories
of quasi-coherent sheaves.

The next couple of results establish the existence of a right
adjoint to the pushfoward functor for unbounded derived categories
of quasi-coherent sheaves. This is a direct application of
Neeman's general approach to duality via topological methods.

\begin{lem}\label{lemma:cop1}
Let $f: \tX \to \tY$ be a morphism of infinitesimal deformations,
with $X$ a separated scheme and $\fb:=f\otimes_{R_1} \CC$ a
separated morphism of schemes. Then, the functor $\aR f_* : \D(
\qcoh(\tX)) \to \D( \qcoh(\tY))$ respects coproducts, that is, for any small
set $\Lambda$,  the natural map
$\amalg_{\lambda \in \Lambda}\aR f_*(\kf_{\lambda}) \to
\aR f_*( \amalg_{\lambda \in \Lambda} \kf_{\lambda})$ is an isomorphism.
\end{lem}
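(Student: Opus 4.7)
The plan is to adapt the classical B\"okstedt--Neeman argument to this setting by computing $\aR f_*$ via a finite \v Cech complex. The statement is local on $\tY$, so it suffices, for every affine open $V\subset Y$, to prove that $\aR\Gamma(V,\aR f_*(-))\iso \aR\Gamma(f^{-1}(V),-)$ preserves small coproducts on $\D(\qcoh(\widetilde{f^{-1}(V)}))$. Since $X$ is separated and $\fb$ is a separated morphism, $U:=f^{-1}(V)$ is a separated quasi-compact open subscheme of $X$, so it admits a \emph{finite} affine open cover $\{U_i\}_{i=1}^n$ all of whose intersections $U_I:=\bigcap_{i\in I}U_i$ remain affine.

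The technical heart is an affine vanishing statement: $H^i(\widetilde{W},\kh)=0$ for $i>0$, for any affine open $W\subset X$ and any $\kh\in\qcoh(\widetilde{W})$. I would deduce this by filtering $\kh$ via the square-zero ideal $\ki:=\ker(\ka_X\to\ko_X)$ given by $(\ast)$: the short exact sequence $0\to\kh\cdot\ki\to\kh\to\kh/(\kh\cdot\ki)\to 0$ has outer terms annihilated by $\ki$, and hence, by the equivalence between $\ka_W$-modules killed by $\ki$ and $\ko_W$-modules, corresponds to quasi-coherent sheaves on the commutative affine scheme $W$. Since the underlying topological space of $\widetilde{W}$ is $W$ itself, Serre's classical affine vanishing applies to the outer terms, and the long exact cohomology sequence propagates the vanishing to $\kh$. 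Granted this, the standard \v Cech-to-derived-functor spectral sequence for the cover $\{U_i\}$ degenerates, so for $\kf\in\qcoh(\widetilde U)$ the complex $\aR\Gamma(U,\kf)$ is quasi-isomorphic to the finite \v Cech total complex with terms $\prod_{|I|=p+1}\kf(U_I)$.

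By the affine description $\qcoh(\widetilde{U_I})\isomor\ka_{\tX}(U_I)\text{-}\cat{Mod}$ recalled at the opening of the appendix, each evaluation $\kf\mapsto\kf(U_I)$ commutes with arbitrary direct sums. Because the \v Cech complex has only finitely many positions and each position is a \emph{finite} product of such evaluations, the functor $\kf\mapsto\check{C}^\bullet(\{U_i\},\kf)$ commutes with small coproducts, and so does the associated total complex. Assembling this over all affine $V\subset Y$ shows that the natural map $\amalg_\lambda \aR f_*(\kf_\lambda)\to \aR f_*(\amalg_\lambda\kf_\lambda)$ is a quasi-isomorphism, as required.

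The only genuinely non-formal step is the affine vanishing, which needs some care because $\qcoh(\widetilde{W})$ is not literally a category of sheaves of modules over a commutative ringed space; but the two-step filtration from $(\ast)$ reduces it cleanly to the classical Serre case, and an analogous reduction underlies the standard identification of derived-functor and \v Cech cohomology here. Everything else is formal and follows from the finiteness provided by the separatedness hypotheses.
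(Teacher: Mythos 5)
Your argument is correct and coincides with the paper's proof, which simply asserts that the proof of Lemma 1.4 of Neeman's Grothendieck duality paper carries over word-for-word: that proof is precisely the finite \v Cech computation you give, and your d\'evissage along the square-zero central ideal from $(\ast)$ supplies exactly the affine vanishing needed to make the transfer to the deformed setting work. The only point to phrase a little more carefully is the passage from a single quasi-coherent sheaf to an unbounded complex $\kf$: this follows, exactly as in Neeman's original argument, because the \v Cech complex has finite length, so $\aR f_*$ has finite cohomological dimension and the totalization computing $\aR\Gamma(U,\kf)$ remains a finite, coproduct-preserving construction.
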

\begin{proof}
The proof of Lemma 1.4 in \cite{N} carries over word-for-word.
\end{proof}

\begin{thm}\label{theorem:ajoint1} {\bf (Neeman)}
Under the hypotheses of the previous lemma, $\aR f_*$ admits a
right adjoint $f^! : \D( \qcoh(\tY)) \to \D( \qcoh(\tX))$.
\end{thm}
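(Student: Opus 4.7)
The plan is to invoke Neeman's Brown representability theorem: any coproduct-preserving triangulated functor out of a compactly generated triangulated category automatically admits a right adjoint. Lemma \ref{lemma:cop1} already establishes that $\aR f_*:\D(\qcoh(\tX))\to\D(\qcoh(\tY))$ preserves small coproducts, so the only remaining task is to show that $\D(\qcoh(\tX))$ is compactly generated.

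To produce compact generators, I would transfer them from the underlying scheme $X$ along the morphism of locally ringed spaces $\ii:(X,\ko_X)\to\tX$ coming from the surjection $\ka_X\epi\ko_X$ in the central extension $(\ast)$. By the B\"okstedt--Neeman result invoked already earlier in the appendix, $\D(\qcoh(X))$ is compactly generated by a set $\mathcal{G}$ of perfect complexes (using separatedness of $X$). I would then check that $\ii_*\mathcal{G}$ is a set of compact generators of $\D(\qcoh(\tX))$. Compactness of each $\ii_*G$ is a consequence of coproduct-preservation of its left adjoint $\eL\ii^*$ together with the fact that $\ii_*$ itself commutes with coproducts, since condition $(\ast)$ exhibits $\ka_X$ as a finitely presented flat $\ko_X$-module of rank two, and $\ii_*$ is essentially the corresponding forgetful functor between module categories. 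Generation is extracted from the short exact sequence $0\to\epsilon\kf\to\kf\to\kf/\epsilon\kf\to 0$ in $\qcoh(\tX)$ induced by $(\ast)$: both outer terms lie in the essential image of $\ii_*$, so for any non-zero $\kf\in\D(\qcoh(\tX))$ at least one of $\epsilon\kf$ or $\kf/\epsilon\kf$ is a non-zero object of $\D(\qcoh(X))$ and hence is hit by a shift of some $G\in\mathcal{G}$; by adjunction this yields a non-zero morphism from a shift of some $\ii_*G$ to $\kf$.

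With compact generation established, Neeman's Brown representability theorem (as used in \cite{N}) applies immediately to the coproduct-preserving triangulated functor $\aR f_*$ of Lemma \ref{lemma:cop1}, giving the desired right adjoint $f^!:\D(\qcoh(\tY))\to\D(\qcoh(\tX))$.

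The main technical obstacle will be making the compact-generation step fully rigorous in the deformed setting: one has to make sure that the adjunction $\eL\ii^*\dashv\ii_*$ and the distinguished triangles arising from $(\ast)$ behave correctly at the level of \emph{unbounded} derived categories of \emph{quasi-coherent} sheaves (not merely $\ka_X$-modules). This is precisely where the identification $\D(\qcoh(\tX))\cong\D_{\qcoh}(\mmod{\tX})$ already invoked via \cite{BN}, together with the defining conditions $(\ast)$ and $(\ast\ast)$ of an infinitesimal deformation, do the heavy lifting and allow the usual Neeman--B\"okstedt template to carry over verbatim.
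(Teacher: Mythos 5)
Your overall architecture is the same as the paper's: coproduct preservation from Lemma \ref{lemma:cop1} plus compact generation of $\D(\qcoh(\tX))$, then Neeman's Brown representability (Theorem 4.1 of \cite{N}). The gap is in your compact generation step: the objects $(\ix)_*G$, for $G$ perfect on $X$, are \emph{not} compact in $\D(\qcoh(\tX))$. Locally the situation is $A=\ka_X(U)$ with $A\otimes_{R_1}\CC\cong\ko_X(U)=:B$, and $(\ix)_*$ is restriction of scalars along $A\epi B$; the module $B$ has infinite projective dimension over $A$ (its minimal free resolution is $\cdots\mor[\cdot\epsilon]A\mor[\cdot\epsilon]A\to B$, exactly as for $\CC$ over $\CC[\epsilon]/(\epsilon^2)$), so $(\ix)_*\ko_X$ is not perfect over $\ka_X$ and hence not a compact object of the derived category. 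Your justification also inverts the adjunction criterion: for an adjoint pair $F\dashv G$, the functor $F$ preserves compact objects when $G$ preserves coproducts. Since $(\ix)_*$ is the \emph{right} adjoint of $\eL\ix^*$, the fact that $(\ix)_*$ commutes with coproducts shows that $\eL\ix^*$ preserves compacts (pullback of perfect is perfect, which is true but not what you need); to get compactness of $(\ix)_*G$ you would instead need the further right adjoint $\ix^!$ to preserve coproducts, which is precisely what fails here. Your d\'evissage via $0\to\epsilon\kf\to\kf\to\kf/\epsilon\kf\to 0$ does show that the $(\ix)_*G$ \emph{generate}, but generation by non-compact objects does not furnish the hypothesis of \cite[Thm.\ 4.1]{N}.

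The paper avoids this by taking as compact generators the perfect complexes on $\tX$ itself (locally bounded complexes of free $\ka_X$-modules, which are genuinely compact), and proves the generation statement --- $\Hom_{\tX}(\kl,\kf)=0$ for all perfect $\kl$ forces $\kf=0$ --- by adapting Lemma 2.6 of \cite{N} and Proposition 6.1 of \cite{BN} to the deformed structure sheaf. Your extension-sequence argument is still useful, but it should be run in that form: reduce testing against perfect $\ka_X$-complexes to testing against their restrictions $\eL\ix^*\kl$ on $X$ via adjunction, rather than trying to promote $(\ix)_*$ of a compact generator to a compact generator.
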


\begin{proof}
The proofs of Lemma 2.6 in \cite{N} and Proposition 6.1 in
\cite{BN} can be adapted to our setting to show that
if  $\kf \in \D(\qcoh(\tX))$ satisfies $\Hom_{\tX} (\kl, \kf) = 0$ for every
perfect object $\kl$, it must in fact be the zero object; in other words,
the category $\D(\qcoh(\tX))$ is
compactly generated. This, and the previous lemma,
furnish the hypotheses of Theorem 4.1 in \cite{N}, from which
the result follows.
\end{proof}

In the following paragraphs, we prove the claimed coherence and
boundedness properties of the right adjoint $f^!$. From now on,
all morphisms between infinitesimal deformations will be assumed
to be of {\em finite type}.

Denote by $\ix: X \to \tX$ and $\iy: Y \to \tY$ the canonical
immersions;  we observe that the functors  $(\ix)_*$ and $(\iy)_*$
are exact.

\begin{lem}\label{lemma:comm}
Let $f: \tX \to \tY$ be a flat morphism of infinitesimal
deformations, with $\fb: X \to Y$ a smooth morphism of schemes.
Then given $\kf \in \D(\qcoh(Y))$, there is an isomorphism $(\ix)_*\fb^!\kf
\cong  f^!(\iy)_*\kf$ in $\D(\qcoh(\tX)$.

\end{lem}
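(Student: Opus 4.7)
The plan is to identify the two sides of the claimed isomorphism as representing the same functor on $\D(\qcoh(\tX))$ and then invoke Yoneda. The essential ingredient beyond the adjunctions already in play is a flat base change identity for the commutative square
\[
\xymatrix{X \ar[r]^{\ix} \ar[d]_{\fb} & \tX \ar[d]^{f} \\ Y \ar[r]^{\iy} & \tY}
\]
namely $\eL\iy^* \circ \aR f_* \cong \aR\fb_* \circ \eL\ix^*$ on $\D(\qcoh(\tX))$. The relevant adjunctions are $(\eL\ix^*,(\ix)_*)$ and $(\eL\iy^*,(\iy)_*)$ coming from the closed immersions, $(\aR\fb_*,\fb^!)$ on the underlying smooth morphism of schemes (Neeman/Grothendieck duality), and $(\aR f_*, f^!)$ furnished by Theorem \ref{theorem:ajoint1}.

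Granting the base change identity, one chains adjunctions: for any $\kg \in \D(\qcoh(\tX))$,
\begin{align*}
\Hom_{\tX}(\kg, f^!(\iy)_*\kf)
&\cong \Hom_{\tY}(\aR f_*\kg, (\iy)_*\kf) \\
&\cong \Hom_Y(\eL\iy^*\aR f_*\kg, \kf) \\
&\cong \Hom_Y(\aR\fb_*\eL\ix^*\kg, \kf) \\
&\cong \Hom_X(\eL\ix^*\kg, \fb^!\kf) \\
&\cong \Hom_{\tX}(\kg, (\ix)_*\fb^!\kf),
\end{align*}
and Yoneda concludes. It is worth checking that the resulting isomorphism coincides with the map obtained directly by adjunction from the counit $\aR\fb_*\fb^!\kf\to\kf$, but this should be a formality once base change is seen to be natural in its arguments.

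The main difficulty lies in establishing the base change identity in this deformed setting, where the structure sheaves are (possibly non-commutative) flat $R_1$-algebras. I would argue as follows. The flatness of $f$ translates into flatness of $\ka_X$ over $f^{-1}\ka_Y$, which together with $\ka_Y\otimes_{R_1}\CC\cong\ko_Y$ yields $\ka_X\lotimes_{f^{-1}\ka_Y} f^{-1}\ko_Y\cong\ko_X$, so the square is Tor-independent in the appropriate sense. On affine opens, using the equivalence between $\qcoh(\widetilde U)$ and the category of $\ka_X(U)$-modules and the exactness of the associated-sheaf construction recorded at the start of the appendix, the base change reduces to a statement about tensor products over Ore-localized $R_1$-algebras, for which condition $(\ast\ast)$ is exactly what is needed. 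To pass from the affine/bounded setting to the unbounded global one, I would invoke the compact generation of $\D(\qcoh(\tX))$ established in the proof of Theorem \ref{theorem:ajoint1}, which reduces the verification of the base change morphism to testing against perfect generators, where the projection formula underpinning flat base change is standard.
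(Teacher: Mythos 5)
Your overall strategy---chain the adjunctions $(\eL\ix^*,(\ix)_*)$, $(\eL\iy^*,(\iy)_*)$, $(\aR\fb_*,\fb^!)$, $(\aR f_*,f^!)$ around the square and feed in the base change identity $\eL\iy^*\aR f_*\cong\aR\fb_*\eL\ix^*$---is exactly the one the appendix uses. But the step ``and Yoneda concludes'' hides the actual content of the proof, and as written it does not go through. In the framework of the appendix, $\eL\ix^*$ and $\eL\iy^*$ are only defined on \emph{bounded above} derived categories (only $\aR f_*$ is extended to unbounded complexes, via Spaltenstein), so your chain of isomorphisms is natural only in $\kg\in\D^-(\qcoh(\tX))$. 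The two objects being compared, $\ka=(\ix)_*\fb^!\kf$ and $\kb=f^!(\iy)_*\kf$, need not lie in $\D^-$ when $\kf$ is unbounded, so you cannot substitute $\kg=\kb$ (or $\kg=\ka$) and extract a comparison morphism from the image of the identity; and a natural isomorphism $\Hom(\kl,\ka)\cong\Hom(\kl,\kb)$ tested only against compact generators $\kl$ does not by itself produce an isomorphism $\ka\cong\kb$ --- compact generation detects when a \emph{given} morphism is an isomorphism, but you must first manufacture the morphism. The paper's proof spends most of its length on precisely this point: it writes $\ka$ as the homotopy colimit of its truncations $(\ka)^{\leq i}\in\D^-$, applies the natural isomorphism $can$ levelwise, uses the triangulated category axioms to fill in a morphism $\mu:\ka\to\kb$ compatible with $can$ on the truncations, observes that every map from a perfect complex factors through $\amalg_i(\ka)^{\leq i}$ so that $can_{\kl}=\mu_*$, and only then invokes compact generation. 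Without this (or without first extending $\eL\ix^*$, $\eL\iy^*$ and their adjunctions to unbounded complexes, which the appendix deliberately avoids doing in the noncommutative setting), your argument is incomplete.

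On the base change identity itself, your Tor-independence heuristic ($\ka_X\lotimes_{f^{-1}\ka_Y}f^{-1}\ko_Y\cong\ko_X$ by flatness of $f$) identifies the right underlying fact, but the paper establishes the isomorphism differently: it encodes $\eL\iy^*$, $\aR f_*$ and $\aR\fb_*\eL\ix^*$ as Fourier--Mukai functors with kernels $\kk_{\iy}$, $\kk_f$, $\kk$ and shows (Lemma \ref{lemma:kuz}, following Kuznetsov) that $\kk_f\circ\kk_{\iy}\cong\kk$, reducing everything to the elementary isomorphism $f^*(\iy)_*\cong(\ix)_*\fb^*$; the delicate point there is replacing the missing perfect spanning class by a flat spanning class $\{\ka_x\}$. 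Your proposed affine-local reduction would likely work but would need the same amount of care with Ore localization and with gluing the local isomorphisms compatibly; as it stands it is a sketch rather than a proof.
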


Assuming the previous lemma for the moment, we have the following
result.

\begin{prop}\label{prop:coh}
Let $f: \tX \to \tY$ be a flat morphism of infinitesimal
deformations, with $\fb$ smooth. Then, $f^!:   \D( \qcoh(\tY)) \to
\D( \qcoh(\tX))$ restricts to a functor $f^!: \Dp(\tY) \to \Db(
\coh(\tX))$, the category $\Dp(\tY)$ being that of perfect
complexes on $\tY$.
\end{prop}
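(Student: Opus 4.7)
The plan is to reduce the claim to the corresponding statement for the underlying smooth morphism $\fb$ via Lemma \ref{lemma:comm}, using the central extension $0\to\ko_Y\to\ka_Y\to\ko_Y\to 0$ as the bridge that controls how any object on $\tY$ is built from its restriction to $Y$.

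First, I would interpret this extension as a distinguished triangle in $\D(\qcoh(\tY))$,
\begin{equation*}
(\iy)_*\ko_Y \lto \ka_Y \lto (\iy)_*\ko_Y \lto (\iy)_*\ko_Y[1],
\end{equation*}
and tensor it with a fixed perfect complex $\widetilde\kf\in\Dp(\tY)$. Perfectness of $\widetilde\kf$ delivers the projection formula $(\iy)_*\ko_Y\lotimes\widetilde\kf\cong(\iy)_*\eL\iy^*\widetilde\kf$, yielding
\begin{equation*}
(\iy)_*\eL\iy^*\widetilde\kf \lto \widetilde\kf \lto (\iy)_*\eL\iy^*\widetilde\kf \lto.
\end{equation*}

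Next, I would apply the right adjoint $f^!$ and invoke Lemma \ref{lemma:comm}, which identifies each outer vertex with $(\ix)_*\fb^!(\eL\iy^*\widetilde\kf)$. The pullback $\eL\iy^*\widetilde\kf$ is perfect on $Y$, and since $\fb$ is smooth of some relative dimension $d$, the classical right adjoint is computed as $\fb^!(-)\cong\fb^*(-)\otimes\omega_{X/Y}[d]$; in particular, it sends perfect complexes to perfect, hence bounded coherent, complexes on $X$. Because the canonical immersion $\ix$ is a closed immersion, $(\ix)_*$ is exact and preserves coherence, placing both outer vertices of the resulting triangle in $\Db(\coh(\tX))$.

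Finally, since $\Db(\coh(\tX))$ is a triangulated subcategory of $\D(\qcoh(\tX))$, the two-out-of-three principle forces the middle term $f^!\widetilde\kf$ to lie in $\Db(\coh(\tX))$ as well, which is exactly the desired conclusion. The main technical obstacle will be justifying the projection formula and the compatibility of $(\iy)_*$ and $\eL\iy^*$ with the twisted $\ka_Y$-module structure; however, these reduce to their classical counterparts via the local description of associated sheaves established earlier in the appendix, together with the explicit form of the central extension $(\ast)$.
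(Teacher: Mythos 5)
Your proposal is correct and follows essentially the same route as the paper: both arguments rest on the triangle $(\iy)_*\eL\iy^*\kf \to \kf \to (\iy)_*\eL\iy^*\kf$ (which the paper writes down directly via multiplication by $\epsilon$, and which you derive by tensoring the central extension $(\ast)$ with the perfect complex), followed by applying $f^!$, identifying the outer terms via Lemma \ref{lemma:comm} and the coherence-preserving properties of $\fb^!$ and $(\ix)_*$, and concluding by two-out-of-three. The extra justification you supply for the triangle and for the boundedness of $\fb^!$ on perfect complexes is a faithful elaboration of what the paper leaves implicit.
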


\begin{proof} Given $\kf \in \Dp(\tY)$, one has the exact triangle:
$$
(\iy)_*\eL\iy^*\kf \mor[\cdot\epsilon]\kf\mor(\iy)_*\eL\iy^*\kf.
$$
Applying the functor $f^!$ and  rearranging, using Lemma
\ref{lemma:comm}, we get the exact triangle:
$$
(\ix)_*\fb^!(\eL\iy^*\kf) \to f^!\kf \to
(\ix)_*\fb^!(\eL\iy^*\kf).
$$
As $\fb^!$ exists between derived categories of {\em coherent}
sheaves and $\eL\iy^*\kf \in \Db(\coh(\tX))$, the outer two terms
are in $\Db(\coh(\tX))$. Consequently, the middle term is also.
\end{proof}

\noindent{\it Proof of Lemma \ref{lemma:comm}.} Pick any object
$\kg \in \D^-(\qcoh(\tX))$ and $\kf \in \D(\qcoh(Y))$. It follows
by adjunction that
$$\aR\Hom_{\tX}(\kg, (\ix)_*\fb^!\kf)\cong \aR\Hom_{\tY}(\aR \fb_* \eL\ix^* \kg, \kf).$$
Assume the base-change property which will be proved in Lemma
\ref{lemma:kuz}: $\eL\iy^*\aR f_* \cong \aR\fb_* \eL\ix^*$, that
is, the standard natural transformation of functors is an
isomorphism. Then, continuing from the previous line, we find that
\[
\begin{split}
\aR\Hom_{\tX}(\kg, (\ix)_*\fb^!\kf)&\stackrel{\cong}{\to} \aR\Hom_{\tY}(\eL\iy^*\aR f_*\kg, \kf)\\
                                                 &\cong \aR\Hom_{\tX}(\kg, f^!(\iy)_*\kf),
\end{split}
\]
all isomorphisms being canonical and functorial in the first argument; call their
composition $can$. To simplify typography, set $\ka=(\ix)_*\fb^!\kf$ and
$\kb:=  f^!(\iy)_*\kf$. Express $\ka$
as the homotopy colimit (see Remark 2.2, \cite{BN}) :
$$
\amalg_{i>0} \left(\ka\right)^{\leq i} \stackrel{id-s}{\longrightarrow}  \amalg_{i>0}
\left(\ka\right)^{\leq i} \longrightarrow \ka
$$
where the bounded above complex $\left(\ka\right)^{\leq i}$ is the $t$-th truncation:
$$
\kh^n  \left( \left(\ka\right)^{\leq i} \right) = \left\{
\begin{array}{l}
\kh^n \left( \ka\right), \mbox{\; if \;} n\leq i \\
\\
0, \mbox{\; if \;} n>i,
\end{array} \right.
$$
and $s$ is the direct sum of the morphisms $\left(\ka\right)^{\leq i} \to
\left(\ka\right)^{\leq i+1}$. This yields the following diagram with exact triangles for rows:
\[
\xymatrix{
    \aR\Hom(\ka, \ka)\ar[r]&
   \Pi_{i>0}\aR\Hom\left(\left(\ka\right)^{\leq i}, \ka\right)\ar[r]\ar[d]^{\stackrel{\cong}{can}}&
    \Pi_{i>0}\aR\Hom\left(\left(\ka\right)^{\leq i}, \ka\right) \ar[d]^{\stackrel{\cong}{can}}\\
    \aR\Hom(\ka, \kb)\ar[r]&
   \Pi_{i>0}\aR\Hom\left(\left(\ka\right)^{\leq i}, \kb\right)\ar[r]&
    \Pi_{i>0}\aR\Hom\left(\left(\ka\right)^{\leq i}, \kb\right).
    }
\]
 By the axioms defining a triangulated category, there exists an
isomorphism $\aR\Hom(\ka, \ka) \dashrightarrow\aR\Hom(\ka, \kb)$ making the diagram commutative;
let $\mu \in \Hom(\ka,\kb)$ be the image of the identity in  $\Hom(\ka,\ka)$ given by this
isomorphism. Any morphism $\kl \to \ka$, with $\kl$ a perfect complex, factors through
$\amalg_{i>0} \left(\ka\right)^{\leq i} \to \ka$. Thus, by the commutativity of the diagram above,
one sees that $can_{\kl}: \aR\Hom_{\tX}(\kl, \ka) \stackrel{\cong}{\to}
\aR\Hom_{\tX}(\kl, \kb)$
coincides with $\mu_*$. Since $\D(\qcoh(\tX)$ is compactly generated by the proof of Theorem
\ref{theorem:ajoint1}, this proves that $\mu: (\ix)_*\fb^!\kf \to f^!(\iy)_*\kf$
is an isomorphism.

\qed

\medskip

We will need some notation for what follows. Given infinitesimal
deformations $\tX$ and $\tY$, let $\tX \times \tY$ denote the
infinitesimal deformation $(X \times Y, \ka_{X \times Y})$, where
the restriction of the structure sheaf $\ka_{X \times Y}$ on open
sets of the form $U \times V \subset X \times Y$, with $U$ an open
affine in $X$ and $V$ an open affine in $Y$, is $(\ka_X(U)
\otimes_{R_1} \ka_Y(V))^{\sim}$ (observe that $ \ka_{X \times Y}$
satisfies $(\ast \ast)$ by Lemma 2.1.8 in \cite{MR}). Suppose $f
:\tX \to \tY$ is a flat morphism; write $\Gg_{\iy}: Y \to Y \times
\tY$ for the graph of $\iy$, $\Gg_f : \tX \to \tY \times \tX$ for
the graph of $f$, and $\dd : X \to Y \times \tX$ for the morphism
given by the pair $(\fb, \ix)$. Define $\kk_{\iy} := (\Gg_{\iy})_*
\ko_Y$, $\kk_f := (\Gg_f)_*\ka_X$, and $\kk := \dd_*\ko_X$. Then
one has isomorphisms of functors between bounded above derived
categories of quasi-coherent sheaves :
$$
\FM{\kk_{\iy}} \cong \eL\iy^*, \hspace{10mm} \FM{\kk_f} \cong \aR
f_*, \hspace{10mm} \FM{\kk} \cong \aR\fb_*\eL\ix^*.
$$
Note that in defining the Fourier--Mukai functors (see
\cite{Toda1}) $\FM{\kk_{\iy}}$, $\FM{\kk_f}$ and $\FM{\kk}$, we
have used the {\em bimodule} structure of the kernels for the
respective structure sheaves. Consider this Cartesian diagram of
infinitesimal deformations:
\[
\xymatrix{
    X\ar[rr]^{\ix}\ar[d]_{\fb}&&\tX \ar[d]^{f}\\
    Y\ar[rr]^{\iy}&&\tY.
    }
\]
The following observation about this diagram will be used later:
there is an isomorphism of functors \begin{equation}\label{obs}
f^*(\iy)_* \cong (\ix)_*\fb^* .
\end{equation}

\begin{lem}\label{lemma:kuz}{\bf (Kuznetsov)}
There is an isomorphism $\kh^0(\kk_f \circ \kk_{\iy}) \cong \kk$,
inducing a morphism of functors  $\eL\iy^* \aR f_* \cong
\FM{\kk_{\iy}}\circ\FM{\kk_f} \to \FM{\kk} \cong
\aR\fb_*\eL\ix^*$. This morphism is an isomorphism if and only if
$\kk_f \circ \kk_{\iy} \cong \kk$, which in turn holds true if and
only if $ f^*(\iy)_* \cong (\ix)_*\fb^* $.
\end{lem}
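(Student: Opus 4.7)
The plan is to compute the convolution $\kk_f\circ\kk_{\iy}$ on $Y\times\tX$ explicitly via flat base change and the projection formula, identify its $\kh^0$ with $\kk$, and recognize the full derived isomorphism $\kk_f\circ\kk_{\iy}\cong\kk$ as a Tor-independence statement for the Cartesian square. The morphism of functors $\eL\iy^*\aR f_*\to\aR\fb_*\eL\ix^*$ will then fall out from the standard correspondence between convolution of kernels and composition of Fourier--Mukai functors.

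Concretely, I would first apply flat base change to the projections $p_{12}$ and $p_{23}$ from $Y\times\tY\times\tX$ in order to rewrite $p_{12}^*\kk_{\iy}\cong(\iota_1)_*\ko_{Y\times\tX}$ and $p_{23}^*\kk_f\cong(\iota_2)_*\ka_{Y\times\tX}$, where $\iota_1,\iota_2\colon Y\times\tX\hookrightarrow Y\times\tY\times\tX$ are the closed immersions $(y,x)\mapsto(y,\iy(y),x)$ and $(y,x)\mapsto(y,f(x),x)$, respectively. Since both $p_{13}\circ\iota_1$ and $p_{13}\circ\iota_2$ are the identity of $Y\times\tX$, the projection formula reduces the convolution to a derived pullback--tensor expression on $Y\times\tX$:
\[
\kk_f\circ\kk_{\iy}\;\cong\;\ko_{Y\times\tX}\lotimes_{\ka_{Y\times\tX}}\eL\iota_1^*\bigl((\iota_2)_*\ka_{Y\times\tX}\bigr).
\]

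By the Cartesian hypothesis the set-theoretic intersection of $\iota_1(Y\times\tX)$ and $\iota_2(Y\times\tX)$ is precisely the image of $\dd=(\fb,\ix)\colon X\to Y\times\tX$: the equation $\iy(y)=f(x)$ forces $x=\ix(x_0)$ and $y=\fb(x_0)$ for a unique $x_0\in X$. A local computation in affine charts then identifies the $\kh^0$ of the right-hand side above with $\dd_*\ko_X=\kk$, which proves the first assertion. The advertised morphism of functors now arises from the natural truncation map $\kk_f\circ\kk_{\iy}\to\kh^0(\kk_f\circ\kk_{\iy})\cong\kk$ in the derived category, combined with the identification of convolution with composition of Fourier--Mukai functors. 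The full derived identification $\kk_f\circ\kk_{\iy}\cong\kk$ then holds exactly when the higher Tor sheaves between $(\iota_1)_*\ko_{Y\times\tX}$ and $(\iota_2)_*\ka_{Y\times\tX}$ vanish; this Tor-independence is the usual criterion for derived base change along the Cartesian square, and because $f$, $\fb$ are flat and $\ix_*$, $\iy_*$ are exact, it is equivalent to the underived isomorphism of functors $f^*(\iy)_*\cong(\ix)_*\fb^*$.

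The main obstacle I anticipate is carrying the flat base change, projection formula, and truncation arguments through in the non-commutative infinitesimal setting: the sheaves $\ka_X$, $\ka_Y$, and $\ka_{Y\times\tX}$ are sheaves of $R_1$-algebras equipped with bimodule structures, and one has to check that the side on which each tensor product is taken matches the bimodule conventions implicit in the kernels $\kk_{\iy}$ and $\kk_f$. Once these bookkeeping issues are settled using the formalism of Section \ref{subsec:category} and the earlier appendix lemmas (which already verify the key flat base-change compatibilities), the computation reduces to the classical case and the three claimed equivalences follow.
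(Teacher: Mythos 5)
Your computation of the convolution and of its $\kh^0$ is fine as far as it goes (it is essentially Kuznetsov's Lemma 2.17 unpacked, which the paper simply cites), and your identification of $\kk_f\circ\kk_{\iy}\cong\kk$ with Tor-independence of the Cartesian square, hence with $f^*(\iy)_*\cong(\ix)_*\fb^*$, is a reasonable route to the second equivalence. But there is a genuine gap in the first one: you prove that $\kk_f\circ\kk_{\iy}\cong\kk$ implies that the morphism of functors is an isomorphism, and you never address the converse. The statement asserts an ``if and only if'': if the natural transformation $\eL\iy^*\aR f_*\to\aR\fb_*\eL\ix^*$ is an isomorphism of functors, then the morphism of kernels $\kk_f\circ\kk_{\iy}\to\kk$ is an isomorphism. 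This is the direction where all the work lies, and it cannot be extracted from your kernel-level Tor computation: knowing that the cone of $\kk_f\circ\kk_{\iy}\to\kk$ induces the zero Fourier--Mukai functor does not formally force that cone to vanish.

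In the paper this step is handled by adapting Kuznetsov's Proposition 2.15 (an isomorphism of kernel functors induces an isomorphism of kernels). That proposition rests on the existence of a perfect spanning class, which fails for a general infinitesimal deformation; the paper's substitute is the ``flat spanning class'' $\{\ka_x : x\in X\}$ of stalks of $\ka_X$, whose flatness and spanning property have to be checked separately (flatness via the Ore localization formalism, the spanning property via the vanishing of higher cohomology of stalks). Your proposal contains no analogue of this, and the same issue recurs in the equivalence with $f^*(\iy)_*\cong(\ix)_*\fb^*$ if one reads that condition, as the paper does, as an isomorphism of the Fourier--Mukai functors attached to the same kernels in the opposite direction. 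You would need either to supply such a spanning-class argument or to restructure the statement so that the functor-level isomorphisms are only ever deduced from the kernel-level one. The non-commutative bookkeeping you flag at the end is a genuine but secondary concern; the missing spanning class is the real obstruction.
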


\begin{proof}
The first statement follows essentially from the proof of Lemma
2.17 of \cite{Kuz}. The reader should note that while our
definition of Fourier--Mukai transform is different from
Kuznetsov's, the proof still works because of the isomorphism
(\ref{obs}).

For the proof of the second statement, we would like to invoke
Proposition 2.15 of \cite{Kuz}, namely, that an isomorphism of
kernel functors induces an isomorphism of kernels. This result,
however, depends on the existence of a perfect spanning class (see
Definition 2.9 in \cite{Kuz}), which is false for a general
infinitesimal deformation. Nevertheless, the category
$\D^-(\qcoh(\tX))$ certainly does admit a ``flat spanning class'':
$\{ \ka_x:\, x \in X \}$, where $\ka_x$ is the stalk of $\ka_X$ at
$x$, seen as a quasi-coherent sheaf over $\tX$. Indeed, flatness
follows from Proposition 2.1.16 (ii) of \cite{MR}, whereas the
spanning property, that is $H^{\bullet}(X, \kg \otimes_{\ka_X}
\ka_x)=0$ for all $x \in X$ implies $\kg=0$, can be proven by the
same argument as in Lemma 2.13 of \cite{Kuz}, using flatness and
the crucial observation that $H^{i>0}(M_x)=0$ for any
quasi-coherent $\ka_X$-module $M$. This is enough to prove the
analogue of Lemma 2.15  for bounded above derived categories of
quasi-coherent sheaves, an easy exercise left to the reader.

The last statement is obtained simply by interpreting the kernels
in the lemma as kernels of Fourier--Mukai functors going in the
opposite direction.
\end{proof}

In view of observation (\ref{obs}), we in fact  have $\eL\iy^* \aR
f_* \cong \aR\fb_*\eL\ix^*$, thus completing the proof of Lemma
\ref{lemma:comm}.

\begin{thm}
Let $f :\tX \to \tY$ be a flat morphism, with $\fb$ smooth. Then
$f^!$ commutes with coproducts and is given by the formula:
$$
f^!\kf= f^*\kf\otimes_{\ka_X} f^!\ka_Y.
$$
\end{thm}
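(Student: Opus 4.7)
The plan is to establish the two assertions in sequence. First, for the coproduct claim: since $f^!$ is right adjoint to $\aR f_*$ between the compactly generated triangulated categories $\D(\qcoh(\tX))$ and $\D(\qcoh(\tY))$ (as in the proof of Theorem \ref{theorem:ajoint1}, with perfect complexes as the compact objects), Neeman's standard criterion reduces the claim to showing that $\aR f_*$ preserves compact objects. By Lemma \ref{lemma:kuz} we have $\eL\iy^* \circ \aR f_* \cong \aR\fb_* \circ \eL\ix^*$, and since $\fb$ is smooth and proper (from the standing assumption on the underlying varieties in the paper), $\aR\fb_*$ preserves perfection; since perfection on $\tX$ is in turn detected by $\eL\ix^*$ via the central extension $0 \to \ko_X \to \ka_X \to \ko_X \to 0$ of $(\ast)$, preservation of perfection by $\aR f_*$ follows.

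Next, to establish the formula, I would construct a natural transformation $\eta_\kf : f^*\kf \otimes_{\ka_X} f^!\ka_Y \to f^!\kf$ by adjunction from the composite
\[
\aR f_*\bigl(f^*\kf \otimes_{\ka_X} f^!\ka_Y\bigr) \cong \kf \otimes_{\ka_Y} \aR f_* f^!\ka_Y \to \kf \otimes_{\ka_Y} \ka_Y \cong \kf,
\]
whose first isomorphism is the projection formula (available for flat $f$) and whose arrow is induced by the counit of $(\aR f_*, f^!)$. For $\kf$ a perfect complex with $\ka_Y$-dual $\kf^\vee$, the chain of adjunctions
\[
\begin{split}
\RHom_{\tX}(\kh, f^!\kf)
&\cong \RHom_{\tY}(\aR f_*\kh, \kf) \\
&\cong \RHom_{\tY}\bigl(\aR f_*\kh \otimes_{\ka_Y} \kf^\vee,\ \ka_Y\bigr) \\
&\cong \RHom_{\tY}\bigl(\aR f_*(\kh \otimes_{\ka_X} f^*\kf^\vee),\ \ka_Y\bigr) \\
&\cong \RHom_{\tX}\bigl(\kh \otimes_{\ka_X} f^*\kf^\vee,\ f^!\ka_Y\bigr) \\
&\cong \RHom_{\tX}\bigl(\kh,\ f^*\kf \otimes_{\ka_X} f^!\ka_Y\bigr),
\end{split}
\]
combined with Yoneda's lemma, shows that $\eta_\kf$ is an isomorphism for every perfect $\kf$.

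To pass to an arbitrary $\kf$, note that both $\kf \mapsto f^!\kf$ and $\kf \mapsto f^*\kf \otimes_{\ka_X} f^!\ka_Y$ are triangulated and commute with arbitrary coproducts---the former by the first step of the proof, the latter because $f^*$ and $\otimes_{\ka_X}$ are left adjoints on their first argument. Hence the full subcategory of $\D(\qcoh(\tY))$ on which $\eta$ is an isomorphism is triangulated, closed under coproducts, and contains all perfect complexes; since the latter form a set of compact generators, it equals $\D(\qcoh(\tY))$.

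The main obstacle will be the projection formula in the twisted infinitesimal-deformation setting. I would expect to prove it by identifying the relevant functors with Fourier--Mukai kernels, as is done for base change in Lemma \ref{lemma:kuz}, and then using the flatness of $f$ together with the exactness of the associated-sheaf construction $M \mapsto M^{\sim}$ recalled earlier in the appendix to reduce the identity to a kernel-level computation paralleling the classical case. A secondary but more routine point is establishing that $\eL\ix^*$ detects perfection on $\tX$, which is a direct consequence of the two-term resolution supplied by $(\ast)$.
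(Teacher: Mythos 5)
Your treatment of the formula $f^!\kf = f^*\kf \otimes_{\ka_X} f^!\ka_Y$ is essentially the proof the paper intends: the appendix simply appeals to the proof of Theorem 5.4 of \cite{N}, which is exactly your argument (the natural map built from the projection formula and the counit, the duality computation on perfect objects, and the passage to all of $\D(\qcoh(\tY))$ by compact generation once both sides preserve coproducts). You are also right that the projection formula in this noncommutative setting is the point needing care; the paper does not spell it out either, only remarking that $f^!\ka_Y\in\Db(\coh(\tX))$ by Proposition \ref{prop:coh}, so that the tensor products occurring in Neeman's argument are defined.

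The gap is in your proof that $f^!$ preserves coproducts. First, you invoke properness of $\fb$, but there is no such standing hypothesis in the appendix: the ambient setting is separated finite-type schemes with finite-type morphisms, and the theorem assumes only that $f$ is flat and $\fb$ is smooth. Without properness $\aR f_*$ need not preserve perfect complexes (already for an open immersion), so Neeman's compactness criterion is not available. Second, even granting properness, the claim that $\eL\ix^*$ detects perfection on $\tX$ ``as a direct consequence of the two-term resolution supplied by $(\ast)$'' fails as stated: the triangle $(\ix)_*\eL\ix^*\kf\to\kf\to(\ix)_*\eL\ix^*\kf$ only exhibits $\kf$ as an extension of pushforwards of perfect complexes, and $(\ix)_*$ destroys perfection --- already $(\ix)_*\ko_X=\ka_X/\epsilon\ka_X$ has infinite Tor-dimension over $\ka_X$, exactly as $\CC$ does over $\CC[\epsilon]/(\epsilon^2)$. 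The correct statement is the finer one that a \emph{bounded complex with coherent cohomology} whose derived restriction $\eL\ix^*$ is perfect is itself perfect, and to apply it you would first need $\aR f_*\kl\in\Db(\coh(\tY))$, which again leans on properness. The paper's own argument sidesteps all of this: applying $f^!$ to the triangle $(\iy)_*\eL\iy^*\kf\xrightarrow{\cdot\epsilon}\kf\to(\iy)_*\eL\iy^*\kf$ and using Lemma \ref{lemma:comm} squeezes $f^!$ into a functorial exact triangle between two copies of $(\ix)_*\fb^!\eL\iy^*$, each of which visibly preserves coproducts, so $f^!$ does too, with no compactness input. You should replace the first step of your proof with this d\'evissage.
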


\begin{proof}
First note that the functors $\eL\iy^*$, $\fb^!$ and $(\ix)_*$
commute with coproducts. Then, using the second exact triangle in
the proof of Proposition \ref{prop:coh}, the first part of the
result follows. For the second part, we appeal to the proof of
Theorem 5.4 of \cite{N}, making use of the fact that $f^!(\ka_Y)
\in \Db(\coh(\tX))$ by Proposition \ref{prop:coh}, so that the
necessary tensor products appearing there are defined (the point
is that tensor products need not be defined for the unbounded
derived category in the noncommutative setting).
\end{proof}

\begin{cor}\label{cor:FMinv}
Let  $\FM{\ke} : \Db(\qcoh(\tX)) \to \Db(\qcoh(\tY))$ be an
integral transform, with $\ke$ a perfect object. Then its right
adjoint is an integral functor with kernel $p_{\tY}^!(\ka_X)
\otimes \ke^{\vee}$, where $p_{\tY} : \tX \times \tY \to \tY$ is
the second projection.  In particular, if $X$ and $Y$ are smooth
and projective, the inverse of a Fourier--Mukai equivalence
between $\Db(\coh(\tX))$ and $\Db(\coh(\tY))$ is a Fourier--Mukai
functor.
\end{cor}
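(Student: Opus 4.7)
The plan is to compute the right adjoint of $\FM{\ke}$ by working through the standard decomposition of the integral transform and applying, at each stage, the appropriate adjunction now available in the infinitesimal setting. Write $\FM{\ke}$ as the composition
\[
\Db(\qcoh(\tX))\xrightarrow{p_{\tX}^*}\Db(\qcoh(\tX\times\tY))\xrightarrow{\ke\otimes(-)}\Db(\qcoh(\tX\times\tY))\xrightarrow{\aR(p_{\tY})_*}\Db(\qcoh(\tY)).
\]
The right adjoint of $p_{\tX}^*$ is $\aR(p_{\tX})_*$ (this is the easy adjunction between pullback and pushforward); the right adjoint of $\ke\otimes(-)$ is $\aR\sHom(\ke,-)$, which because $\ke$ is perfect is canonically isomorphic to $\ke\dual\otimes(-)$; and the right adjoint of $\aR(p_{\tY})_*$ is precisely the functor $p_{\tY}^!$ constructed in Theorem \ref{theorem:ajoint1}. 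Composing these, the right adjoint of $\FM{\ke}$ is
\[
\kg\longmapsto\aR(p_{\tX})_*\bigl(\ke\dual\otimes p_{\tY}^!(\kg)\bigr).
\]

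The next step is to move $p_{\tY}^!$ through the tensor product using the formula proved in the theorem immediately preceding the corollary: for the flat morphism $p_{\tY}:\tX\times\tY\to\tY$ (with smooth underlying reduction) one has $p_{\tY}^!\kg\cong p_{\tY}^*\kg\otimes p_{\tY}^!\ka_Y$. Substituting this in, the right adjoint becomes
\[
\kg\longmapsto\aR(p_{\tX})_*\bigl((\ke\dual\otimes p_{\tY}^!\ka_Y)\otimes p_{\tY}^*\kg\bigr),
\]
which is visibly the integral transform with kernel $\ke\dual\otimes p_{\tY}^!\ka_Y$, as claimed. The two non-trivial inputs have therefore been isolated: the existence of $p_{\tY}^!$ (the content of Theorem \ref{theorem:ajoint1}) and the explicit formula $p_{\tY}^!\kg\cong p_{\tY}^*\kg\otimes p_{\tY}^!\ka_Y$; both are available, so the computation is essentially formal.

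For the ``in particular'' clause, suppose $X$ and $Y$ are smooth and projective and $\FM{\ke}$ is an equivalence between $\Db(\coh(\tX))$ and $\Db(\coh(\tY))$. The inverse equivalence is automatically the right adjoint, so by the first part it is the integral transform with kernel $\kp:=\ke\dual\otimes p_{\tY}^!\ka_Y$. To conclude it is a Fourier--Mukai functor (in the sense of the paper) one only needs to check that $\kp$ is a perfect object in $\Dp(\tY\times\tX,\cdot)$ with the appropriate twist. Perfectness of $\ke\dual$ is immediate from perfectness of $\ke$, and perfectness of $p_{\tY}^!\ka_Y$ follows from Proposition \ref{prop:coh} together with the smoothness and properness of $p_{\tY}$ in the underlying classical geometry (so that the dualizing complex is a shifted line bundle).

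The main obstacle I would expect is not the formal adjunction juggling but the verification that the ingredients needed to make each step meaningful transfer from the classical to the infinitesimal setting, namely: that $\RHom(\ke,-)\cong\ke\dual\otimes(-)$ continues to hold for perfect $\ke$ in the non-commutative sheaf-of-algebras context, and that the projection formula/base change isomorphisms used implicitly in rewriting $p_{\tY}^!\kg$ agree with those established through Lemma \ref{lemma:kuz}. Both are by now standard once one knows, as the appendix has already established, that the associated-sheaf construction is exact, that quasi-coherent right $\ka_{\tX}$-modules form a nice abelian category with enough flats and injectives, and that the six-functor formalism extends to this setting for maps whose underlying classical morphism is well-behaved.
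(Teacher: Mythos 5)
Your argument is correct and is precisely the (implicit) proof the paper intends: the corollary is stated without a written proof as a direct consequence of the preceding theorem, and your termwise adjunction of the three factors of the integral transform ($p_{\tX}^*$, $\ke\otimes(-)$, $\aR(p_{\tY})_*$) combined with the formula $f^!(-)\cong f^*(-)\otimes_{\ka_X} f^!\ka_Y$ is exactly the computation being invoked. Note also that you have silently, and correctly, read the kernel as $\ke\dual\otimes p_{\tY}^!(\ka_Y)$: the $\ka_X$ in the printed statement must be a typo, since $p_{\tY}^!$ takes its argument on $\tY$.
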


Denote by $\kz(\ka_X)$ the center of $\ka_X$, and for $\tt \in
H^2(\tX, \kz(\ka_X)^{\times})$, let $\qcoh(\tX, \tt)$ and
$\coh(\tX, \tt)$ be the categories of twisted quasi-coherent and,
respectively, twisted coherent sheaves on X (see Definition 4.1,
\cite{Toda1}).

\begin{cor}\label{cor:twisted}
Let $f: \tX \to \tY$ be a flat morphism, with $\fb$ smooth and
projective; let $\tt \in H^2(\tY, \kz(\ka_Y)^{\times}).$
 Then, $\aR f_*: \D(\qcoh(\tX, f^*\tt)) \to \D(\qcoh(\tY, \tt))$ admits a right adjoint $f^!$, with
 $f^!(\kf) = f^*(\kf) \otimes_{\ka_X} f^!(\ka_Y)$, where $f^!$ in the term on the right is the untwisted right
adjoint of Theorem \ref{theorem:ajoint1}. Moreover, $f^!$
restricts to $f^!: \Dp(\tY, \tt) \to \Db(\coh(\tX), f^*\tt)$.

The analogue of Corollary \ref{cor:FMinv} also holds in this
setting.
\end{cor}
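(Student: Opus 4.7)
The plan is to bootstrap everything from the untwisted results proved in the appendix by exploiting the local triviality of the twist $\tt$. I would begin by choosing a \v{C}ech cover $\{U_i\}$ of $Y$ on which $\tt$ trivializes; then on each preimage $f^{-1}(U_i)$ the pulled-back twist $f^*\tt$ also trivializes, and on these pieces the twisted categories $\qcoh(\tX, f^*\tt)$ and $\qcoh(\tY, \tt)$ become ordinary module categories over $\ka_X$ and $\ka_Y$. All the operations I need---flat pull-back, tensor product over $\ka_X$, and push-forward $\aR f_*$---are local in nature and descend compatibly from these local identifications, so each statement to prove reduces to its untwisted counterpart modulo cocycle bookkeeping.

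The first step is to verify the two hypotheses required by Neeman's existence theorem (used in the proof of Theorem \ref{theorem:ajoint1}): that $\D(\qcoh(\tX, f^*\tt))$ is compactly generated, and that $\aR f_*$ commutes with small coproducts. Compact generation follows from the argument of Lemma 2.6 in \cite{N} and Proposition 6.1 in \cite{BN}, combined with a flat spanning class built from the stalks $\ka_x$ as in the proof of Lemma \ref{lemma:kuz}; the twist merely alters transition data and does not affect local compactness. The commutation with coproducts is local on $\tY$ and reduces, on each trivializing chart, to Lemma \ref{lemma:cop1}. Neeman's Theorem 4.1 then produces the right adjoint $f^!$.

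Next I would establish the explicit formula $f^!(\kf) = f^*(\kf) \otimes_{\ka_X} f^!(\ka_Y)$, observing that the right-hand side carries the twist $f^*\tt$ as required. Both sides represent the functor $\kg \mapsto \RHom_{\tY}(\aR f_*\kg, \kf)$ via the projection formula $\aR f_*(\kg \otimes_{\ka_X} f^*\kl) \cong \aR f_*\kg \otimes_{\ka_Y} \kl$ and tensor-hom adjunction, exactly as in Theorem 5.4 of \cite{N}; the tensor product on the right is defined because $f^!(\ka_Y) \in \Db(\coh(\tX))$ by Proposition \ref{prop:coh}. The restriction $f^! : \Dp(\tY, \tt) \to \Db(\coh(\tX), f^*\tt)$ is then obtained exactly as in Proposition \ref{prop:coh}: apply $f^!$ to the fundamental triangle $(\iy)_*\eL\iy^*\kf \to \kf \to (\iy)_*\eL\iy^*\kf$ arising from the square-zero extension $(\ast)$, and invoke the twisted analogue of Lemma \ref{lemma:comm}, whose proof again localizes over the trivializing cover.

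The main obstacle I anticipate is the careful twist bookkeeping: verifying that the projection formula, flat base change, and the twisted analogue of Lemma \ref{lemma:comm} glue to global isomorphisms despite the non-trivial cocycle. Once these compatibilities are checked on intersections $U_i \cap U_j$, the global statements follow automatically because every functor in sight is built from tensor products, internal $\sHom$'s, and direct images that commute with restriction. The twisted version of Corollary \ref{cor:FMinv} is then immediate: if $\FM{\widetilde\ke}: \Db(\coh(\tX_1)) \isomor \Db(\coh(\tX_2))$ is a Fourier--Mukai equivalence with perfect kernel in the twisted setting, the formula $p_{\tY}^!(\ka_X) \otimes \widetilde\ke^{\vee}$ provides the kernel of its right adjoint, which coincides with its quasi-inverse and is itself a perfect complex, so the inverse is again of Fourier--Mukai type.
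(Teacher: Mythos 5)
Your proposal is correct and follows essentially the same route as the paper, which simply declares the twisted case ``standard given the previous results in the untwisted setting'' and cites Theorem 2.4.1 of C\u{a}ld\u{a}raru's thesis---precisely the local-trivialization-and-gluing argument you spell out. You have merely written down the details the authors chose to leave implicit.
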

\begin{proof}
This is standard given the previous results in the untwisted setting (see, for example, Theorem 2.4.1 of \cite{Cal} ).
\end{proof}

\begin{remark}
Let us define {\em deformations of order} $n$ as locally ringed
spaces of the form $(X, \ka_{X,n})$, where $X$ is a separated,
noetherian scheme over $\CC$, and $\ka_{X,n}$ is a sheaf of  local
flat $R_n:=\CC[\epsilon]/\CC[\epsilon^{n+1}]$-algebras with a
fixed isomorphism $\ka_X \otimes_{R_n} \CC \cong \ko_X$, satisfying
the following recursive conditions:
\begin{itemize}
\item[($\ast '$)]  $0 \to \ko_X \to \ka_{X,n} \to \ka_{X,n-1}
\to 0$ is a central extension (with $\ka_{X,n-1}$ a deformation
of order $n-1$),
\item[($\ast \ast '$)] for any local section $f\in \ko_X(U)$ over an
affine open set $U$, there is a lift $\hat{f}\in \ka_{X,n}(U)$ such
that the multiplicative set $\{ \hat{f}^n : n\geq 0\}$ respects
the left and right Ore localization conditions.
\end{itemize}
Then, it is not hard to see by d\'evissage, using the exact
triangles arising from the exact sequence
$
0\to \CC \to R_{i+1} \to R_i \to 0,
$
as in the proof of Proposition \ref{prop:coh}, that the results of
this appendix carry over to order $n$. The usefulness of this
generalization may be limited, however.
Indeed, while every infinitesimal
deformation of the abelian category $\coh(\ko_X)$ for a smooth and
projective variety $X$ admits a description as the twisted category
$\coh(\tX, \tau)$ of an infinitesimal deformation $(\tX, \ka_X)$ in our
sense, it
is not clear to us what form a general higher order deformation takes.
\end{remark}

\end{document}